\definecolor{red}{rgb}{1.00,0.00,0.00}
\newcommand{\hema}[1]{{\color{blue} \sf $\star\star$ Hema: [#1]}}
\newcommand{\phil}[1]{{\color{red} \sf $\star\star$ Philippe: [#1]}}
\newcommand{\hemadel}[1]{{\color{green} \sf $\star\star$ Hema: [#1]}}
\numberwithin{equation}{section}
\newtheorem{theorem}{Theorem}[section]
\newtheorem*{thm}{Theorem}
\newtheorem{lemma}[theorem]{Lemma}
\newtheorem{proposition}[theorem]{Proposition}
\newtheorem{example}[theorem]{Example}
\newtheorem{remark}[theorem]{Remark}
\newtheorem{definition}[theorem]{Definition}
\newtheorem{question}{Question}
\def\sA{\langle A\rangle}
\def\sAg{\langle A(G)\rangle}
\def\sB{\langle B\rangle}
\def\sC{\langle C\rangle}
\def\sD{\langle D\rangle}
\def\rA{k[A]}
\def\rAg{k[A(G)]}
\def\rB{k[B]}
\def\C{\mathfrak{C}}
\def\N{\mathbb{N}}
\def\Q{\mathbb{Q}}
\def\Z{\mathbb{Z}}
\def\PP{\mathbb{P}}
\def\a{{\bf a}}
\def\b{{\bf b}}
\def\u{{\bf u}}
\def\t{{\bf t}}
\def\rk#1{\hbox{\rm rank}\,(#1)}
\def\height#1{\hbox{\rm ht}\,(#1)}
\begin{document}
\title{Gluing and splitting of homegeneous toric ideals}

\author{Philippe Gimenez}
 \address{IMUVA-Mathematics Research Institute, Universidad de Valladolid, 47011 Valladolid, Spain.}
 \email{pgimenez@uva.es}
 
\author{Hema Srinivasan}
 \address{Mathematics Department, University of Missouri, Columbia, MO 65211, USA.}
 \email{SrinivasanH@missouri.edu}

\thanks{The first author was partially supported by grant PID2022-137283NB-C22 funded by MCIN/AEI/ 10.13039/501100011033 and by ERDF "A way of making Europe".
The second author was partially supported by a grant from Simons Foundation.
\\
{\bf Keywords}: semigroup rings, gluing, splitting, degenerate semigroups, Cohen-Macaulay rings. \\
{\bf MSC}: 13H10, 13A02, 13D02 20M14 20M25}
\maketitle  
\begin{abstract}
We show that any two homogeneous affine semigroups can be glued by embedding them suitably in a higher dimensional space.
As a consequence, we show that the sum of their homogeneous toric ideals is again a homogeneous toric ideal, and that the minimal graded free resolution of the associated semigroup ring is the tensor product of the minimal resolutions of the two smaller parts.  We apply our results to toric ideals associated to graphs to show how two of them can be a splitting of a toric ideal associated to a graph or an hypergraph. 
\end{abstract}

\section*{Introduction}

Let $A$ be an $n\times p$ matrix over the integers.  The toric ideal $I_A$ of $A$ is a binomial ideal in $R=k[x_1, \ldots x_p]$ generated by the binomials 
$x^{\alpha_+} - x^{\alpha_-}$ where $\alpha=\alpha_+- \alpha_-\in \Z^p$ satisfies $A\cdot\alpha= 0$. The ideal $I_A$ is necessarily prime and the dimension of $R/I_A$ is precisely the rank of the matrix $A$.  
We say an $n\times p$ matrix over the integers is homogeneous if the  toric ideal $I_A$ is homogeneous with the standard grading.  

\smallskip

Gluing is a construction by which a semigroup $\sA$ of embedding dimension $p$ is joined with a semigroup $\sB$ of embedding dimension $q$ to produce a semigroup $\sC$ of embedding dimension $p+q$ with a strong requirement on 
the three associated toric ideals.
This procedure is powerful in that $\sC$ is complete intersection, Cohen-Macaulay or Gorenstein if and only if $\sA$ and $\sB$ are too.  Further, it is possible to compute explicitly all homological invariants of $\sC$ from those of $\sA$ and $\sB$ for the resolution of $\sC$ is explicitly obtained from that of $\sA$ and $\sB$. The notion of gluing in the context of numerical semigroups was introduced by Delorme \cite{De} 
and further generalized, studied and analyzed by many others;
see, e.g., \cite{rosales}, \cite{FMS}, \cite{RG}.  

\smallskip

Two numerical semigroups can always be glued.  However, subsemigroups of $\N^n$ for $n\ge 2$, may not always be glued.  Indeed, by the necessary condition for gluing in \cite{Res22}, two subsemigroups of $\N^n$ given by matrices $A$ and $B$ can be glued into a subsemigroup of $\N^n$ only if $\rk A+ \rk B -1 \leq n$.  Thus, if $A$ and $B$ have both maximal rank $n$ and $n\geq 2$, then they cannot be glued in $\N^n$.  This raises the question that given $\sA$ and $\sB$ in $\N^n$ and $\N^m$ respectively, can we embed them in a higher dimensional space, so that they can be glued in $\N^t$?  We answer this question in the affirmative for homogeneous semigroups by giving explicit constructions. As a consequence, we obtain the following result: 

%

\begin{thm}[Theorem \ref{thm:A+B=C}]
Let $I_A\subset k[x_1, \ldots, x_p]$ and $I_B\subset k[y_1, y_2, \ldots y_q]$ be two homogeneous toric ideals associated to two $n\times p$ and $m\times q$ matrices  $A$ and $B$ respectively. Set $R:=k[x_1, \ldots, x_{p},z, y_1, \ldots, y_q]$.  For any $1\le i\le p, 1\le j\le q$,  consider the ideals $I_A':=I_A\vert_{x_i=z}\cdot R$ and $I_B':=I_B\vert_{y_j=z}\cdot R$.
Then, 
$I_C:=I_A'+I_B'$ is a homogeneous toric ideal, and $R/I_C$ is isomorphic to the semigroup ring of a suitable $(m+n-1)\times (p+q-1)$ matrix $C$. 
Moreover, if  $F_A$ and $F_B$ are minimal graded free resolutions of $R/I_A'$ and $R/I_B'$ respectively, then $F_A\otimes _k F_B$ is a minimal graded resolution of $k[C]$. 
\end{thm}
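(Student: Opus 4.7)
My plan is to construct the matrix $C$ explicitly, identify $I_C$ with $I_A'+I_B'$ via a gluing argument, and then obtain the resolution from the principle that gluings tensor minimal resolutions.

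After permuting variables we may assume $i=p$ and $j=1$. Writing $A=[a_1\ \cdots\ a_p]$ with $a_k\in\N^n$ and $B=[b_1\ \cdots\ b_q]$ with $b_\ell\in\N^m$, I would produce the matrix $C$ of size $(n+m-1)\times(p+q-1)$ by embedding the columns of $A$ into $\N^{n+m-1}$ (padding $m-1$ zeros at the bottom), embedding the columns of $B$ symmetrically (padding $n-1$ zeros at the top), and gluing them along a single column corresponding to the new variable $z$. Homogeneity of $I_A$ and $I_B$ furnishes positive rational weight vectors $\lambda\in\Q^n$ and $\mu\in\Q^m$ with $\lambda\cdot a_k=\mu\cdot b_\ell=1$, which combine into a weight vector on the rows of $C$ making every column sum to one; hence $I_C$ is automatically homogeneous in the standard grading.

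For the equality $I_C=I_A'+I_B'$, the inclusion $\supseteq$ is immediate: each generator of $I_A'$ (respectively $I_B'$) corresponds, after the substitution $x_p\mapsto z$ (resp.\ $y_1\mapsto z$), to a lattice relation among the relevant columns of $C$. For the other inclusion, let $\gamma$ be any kernel vector of $C$ and exploit the block structure: the top $n-1$ rows involve only the $A$-columns and the $z$-column, while the bottom $m-1$ rows involve only the $B$-columns and the $z$-column. Consequently $\gamma$ decomposes, up to a suitable multiple of the $z$-column, into an $A$-relation and a $B$-relation, and lifting to binomials writes any $\mathbf{u}^\alpha-\mathbf{u}^\beta\in I_C$ as a sum of an element of $I_A'$ and an element of $I_B'$. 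This is precisely the gluing condition set up in the preliminary sections of the paper.

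Once $C$ is recognized as the gluing of the $A$- and $B$-embeddings along $z$, the classical tensor principle for glued semigroup rings applies: the variable supports $\{x_1,\dots,x_p,z\}$ of $I_A'$ and $\{z,y_1,\dots,y_q\}$ of $I_B'$ intersect only in $z$, which is a nonzerodivisor on both $R/I_A'$ and $R/I_B'$ (the $z$-column is a primitive generator of the combined semigroup). A K\"unneth-type argument then shows that $F_A\otimes_k F_B$ is a free resolution of $R/I_C=k[C]$, and minimality follows because the differentials of $F_A$ and $F_B$ lie in the graded maximal ideal. The main obstacle I anticipate is the decomposition step: producing an honest splitting of a kernel vector of $C$ into an $A$-part and a $B$-part meeting along $z$ requires careful tracking of the $z$-exponent, and it is here that the standard-grading hypothesis is essential — without it, the bookkeeping on the shared column cannot be balanced.
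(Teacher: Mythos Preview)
Your construction of $C$ has a genuine gap. If you pad the columns of $A$ with $m-1$ zeros at the bottom and the columns of $B$ with $n-1$ zeros at the top, then the embedded $p$-th column of $A$ is $(a_{1p},\dots,a_{np},0,\dots,0)^{T}$ while the embedded first column of $B$ is $(0,\dots,0,b_{11},\dots,b_{m1})^{T}$. These share only row $n$, and there they carry the unrelated entries $a_{np}$ and $b_{11}$. So there is no single column of $C$ that simultaneously plays the role of $x_p$ in the $A$-relations and of $y_1$ in the $B$-relations after the substitution $x_p\mapsto z$, $y_1\mapsto z$. Your proposed weight vector runs into the same problem: you need $\lambda_n$ in position $n$ for the $A$-columns and $\mu_1$ in the same position for the $B$-columns, and these need not agree. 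The paper's construction (Proposition~\ref{prop:rankC} and Theorem~\ref{thm:gluing}) avoids this by padding with \emph{constant} rows (the entries $b_{21},\dots,b_{m1}$ below $A$ and $a_{1p},\dots,a_{n-1,p}$ above $B$) and shifting one row of each matrix by the same constant so that the last column of $A'$ literally equals the first column of $B'$. That is the delicate step, and it is exactly where homogeneity is used (Lemma~\ref{lem:homogeneousMatrix}); zero-padding does not suffice.

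Your resolution argument is also not quite the paper's. The paper does not argue that $z$ is a nonzerodivisor on both quotients and invoke a K\"unneth principle. Instead it works first in the larger ring $k[x_1,\dots,x_p,y_1,\dots,y_q]$, where $I_A$ and $I_B$ live in disjoint sets of variables so that $F_A\otimes_k F_B$ resolves $k[X,Y]/(I_A+I_B)$ automatically; then it observes that the gluing binomial $x_p-y_1$ is a nonzerodivisor on this quotient (because $I_A+I_B$ is prime), so tensoring with $k[X,Y]/(x_p-y_1)$ preserves exactness and yields the claimed resolution over $R$. Your sketch of the kernel decomposition is morally the content of Proposition~\ref{prop:rankC}, but the actual bookkeeping there relies on the constant-row padding, not on a block of zeros.
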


Using the theorem above, we can recover, explain and add to the existing notions on splitting of graphs and toric ideals associated to them.  A graph $G$ is said to split along an edge $e$ into two subgraphs $G_1$ and $G_2$ if $G_1$ and $G_2$ have exactly one edge $e$ (and its two vertices) in common, and their union is $G$. We will say that the corresponding toric ideal $I_G$ splits if, moreover, $I_G = I_{G_1}+I_{G_2}$. 
We show that $I_G$ is a splitting of $I_{G_1}$ and $I_{G_2}$ if and only if at least one of $G_1$ or $G_2$ is bipartite. 
This recovers a result in \cite{FHKV}.  
%
Further, we will show that any two graphs $G_1$ and $G_2$ are a splitting of a 3-uniform hypergraph in the following sense: we can embed the incidence matrices of $G_1$ and $G_2$ in larger matrices that are homogeneous of degree $3$, and glue them to get the incidence matrix of a 3-uniform hypergraph $G$ such that $I_G = I_{G_1}+I_{G_2}$.   

\smallskip

The paper is organized as follows.  In the first section, we define homogeneous toric ideals and explain how the gluing is achieved.
In section \ref{sec:splitting}, splitting of toric ideals is defined with an application of homogeneous gluing to homogenous and non-homogneous toric ideals. An application and an interpretation of this concept to graphs and hypergraphs is in section \ref{sec:graphs}, followed by illustrating examples in section \ref{sec:examples}. 

\section{Gluing two homogeneous semigroups}\label{sec:gluing}

In this first section, we provide a completely general way of gluing two homogeneous semigroups by using the matrices that define them.  
To begin with, we will take the entries in $A$ to be non negative integers. We will see later in Remark \ref{rk:nonnegative} that, in the homogeneous case, we can always assume this without loss of generality.

\smallskip
Let $A= (a_{ij})$ be an $n\times p$ matrix with entries in $\N$. We can consider the columns $\a_j$ of $A$ as elements of $\N^n$ and denote also by $A$ the set $\{\a_1,\ldots,\a_p\}$. 
%
The subsemigroup of $\N^n$ generated by the columns of  $A$ will be denoted by $\sA$.  Let $k$ be an arbitrary field. The semigroup ring of $A$ is $\rA \simeq k[x_1, \ldots, x_p]/I_A$, where $I_A$ is the kernel of the map $\phi_A: k[x_1,\ldots, x_p]\to k[t_1, \ldots, t_n]$ defined by $\phi_A(x_j) = \prod_{i=1}^n t_i^{a_{ij}}$. Denote by $\t^{\a_j}$ the monomial $\prod_{i=1}^n t_i^{a_{ij}}$. The ideal $I_A$ is a binomial ideal generated by $x^{\alpha_+} - x^{\alpha_-}$ with $\alpha_+,\alpha_-\in\N^p$ having disjoint supports and where $\alpha=\alpha_+- \alpha_-\in \Z^p$ satisfies $A\cdot\alpha= 0$. This $I_A$ is called a toric ideal. It is, indeed, a prime ideal of height $p-r$ where $r$ is the rank of the matrix $A$; see, e.g., \cite[Lemma 4.2]{sturm}. 

\begin{definition}{\rm Two matrices $A$ and $B$ over natural numbers are  {\it equivalent} if their toric rings  $K[A]$ and $k[B]$ are equal.  We write $A \sim  B$.  Indeed, this $\sim $ is an equivalence relation on the set of matrices over natural numbers. 
}\end{definition}

\medskip
The matrix $A$
is said to be {\it homogeneous} if there exist $\lambda_1, \ldots, \lambda_n\in \Z$ and $d>0$ such that $(\lambda_1 \cdots \lambda _n)\times A= d\cdot (1\cdots 1)$. It should be noted that neither the $\lambda_i$'s nor $d$ are unique. Still, when this occurs, we will sometimes say that $A$ is homogeneous of degree $d$ (knowing that a homogeneous matrix may have several distinct degrees).  Equivalently, $A$ is homogeneous if there are $\lambda_1, \ldots, \lambda_n\in \Q$ such that $\sum_{i=1}^n \lambda_i a_{ij} = 1, 1\le j\le p$.  Here the $\lambda_i$'s may still be not unique.

\begin{example}{\rm
For $A = \begin{pmatrix} 
0&1&2&3 \\ 3&2&1&0 \\ 1&1&1&1
\end{pmatrix}
$, one has that $\lambda_1=\lambda_2=1$, $\lambda_3=0$ and $d=4$ works but also $\lambda_1=\lambda_2=0$, $\lambda_3=1$ and $d=1$.
}\end{example}

For an element $\alpha \in \N^p$, denote by $|\alpha|$ the sum of the entries in $\alpha$. 
When $A$ is homogeneous, $A\cdot\alpha = 0 \implies (1\cdots 1)\cdot \alpha = 0$, and hence   $|\alpha| = 0$ and conversely. Thus, the matrix $A$ is homogeneous if and only if the toric ideal $I_A$ is homogeneous and the semigroup ring $\rA$ is graded with the standard grading; see, e.g., \cite[Lem. 4.14]{sturm}.

\smallskip

It is easy to see that some row operations on a homogenous matrix do not change the associated toric ideal and we collect them in the following lemma for ease in reference.
\begin{lemma}\label{lem:homogeneousMatrix}
Let $A$ be a homogeneous $n\times p$ matrix with $(\lambda_1 \cdots \lambda _n)\times A= d\cdot (1\cdots 1)$ for some $\lambda_1, \ldots, \lambda_n\in \Z$ and $d>0$.
Then, the following operations on the rows of $A$ do not change the associated toric ideal and the semigroup ring, and hence, the result matrix $A'$ obtained by applying operations of this type to $A$ is equivalent to the original matrix $A$, i.e., $A' \sim A$:
\begin{enumerate}
\item multiply all the entries of a row  by the same non-zero integer, simplify them by a common factor, or add a multiple of one row to another (this holds also in the non-homogeneous case),
\item add to all the entries of a row $i$, $1\leq i\leq n$, the same integer $s_i$ provided $d+\lambda _is_i\neq 0$,
\item add to the matrix $A$ a new row where all the entries are identical.  
\end{enumerate}
\end{lemma}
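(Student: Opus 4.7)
The plan is to observe that the toric ideal $I_A$, and hence the semigroup ring $\rA \simeq k[x_1,\ldots,x_p]/I_A$, depends only on the integer lattice $L_A := \{\alpha\in\Z^p : A\alpha=0\}$, since $I_A$ is generated by the binomials $x^{\alpha_+}-x^{\alpha_-}$ with $\alpha_+-\alpha_-\in L_A$. Hence to prove (1), (2), and (3) it suffices in each case to show that the indicated row operation leaves $L_A$ unchanged.

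For (1), the three classical operations (scaling a row by a nonzero integer, removing a common factor from a row, or adding an integer multiple of one row to another) each replace one linear equation in the system $A\alpha=0$ by an equivalent one over $\Z$, so the integer solution set $L_A$ is preserved. No use of homogeneity is needed, which is why the lemma asserts this part holds also in the non-homogeneous case.

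For (2), let $A'$ be obtained by adding $s_i$ to every entry of row $i$. The key computation is
\[
(A'\alpha)_i \;=\; (A\alpha)_i + s_i\,|\alpha|,
\]
while the remaining rows of $A'\alpha$ and $A\alpha$ coincide. If $\alpha\in L_A$ then $|\alpha|=0$ by homogeneity of $A$, so $A'\alpha=A\alpha=0$ and $L_A\subseteq L_{A'}$. The hypothesis $d+\lambda_is_i\neq 0$ is precisely what guarantees that $A'$ is itself homogeneous: multiplying by $(\lambda_1\cdots\lambda_n)$ on the left gives $(\lambda_1\cdots\lambda_n)\,A' = (d+\lambda_is_i)(1\cdots 1)$, and after flipping the signs of all $\lambda_j$'s if necessary one obtains a positive degree. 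Applying the same computation on the side of $A'$ then yields $L_{A'}\subseteq L_A$. For (3), appending a row with all entries equal to some integer $c$ adds the single equation $c|\alpha|=0$ to the system $A\alpha=0$; homogeneity forces any $\alpha\in L_A$ to satisfy $|\alpha|=0$, so the new equation is redundant, and $L_A$ is preserved.

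The main subtlety I anticipate is bookkeeping in (2): the new matrix $A'$ may acquire negative entries, so it no longer literally represents a subsemigroup of $\N^n$ generated by its columns. This is not a true obstacle, because the equality $I_A=I_{A'}$ established by the lattice argument immediately gives $k[x_1,\ldots,x_p]/I_A=k[x_1,\ldots,x_p]/I_{A'}$ as quotient rings, so the equivalence $A'\sim A$ of toric rings is genuine even when $A'$ has negative entries. A short parenthetical remark making this explicit, in parallel with the reduction to nonnegative matrices announced in the introduction of the section, should suffice to close the argument.
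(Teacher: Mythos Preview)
Your argument is correct. The paper itself does not supply a proof of this lemma, stating only that ``it is easy to see'' before collecting the operations for later reference; your approach via the integer kernel $L_A=\{\alpha\in\Z^p:A\alpha=0\}$ is exactly the natural way to fill in the details, and your handling of (2) correctly identifies why the hypothesis $d+\lambda_i s_i\neq 0$ is needed (it makes $A'$ homogeneous, hence forces $|\alpha|=0$ for $\alpha\in L_{A'}$, giving the reverse inclusion).
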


Note that a matrix $A'$ obtained by applying Lemma \ref{lem:homogeneousMatrix} to $A$ may not have the same number of rows as $A$, but $A$ and $A'$ have the same number of columns and the same rank. 

\begin{remark}\label{rk:nonnegative}{\rm
Toric ideals are sometimes defined as we did at the beginning of this section but considering a matrix $A$ with entries in $\Z$. In this case, the map $\phi_A$ has to be considered from $k[x_1,\ldots,x_p]$ to the Laurent polynomial ring $k[t_1^\pm,\ldots,t_n^\pm]$.
Note that if $A$ is a homogeneous matrix in $\Z$, with $\lambda \in \Z^n$ such that $\lambda ^t A = d(1, \ldots, 1)$ and $d>0$, there exists $z\in \N$ such that $a_{ij}+ z \in \N$ for all $i, j$ and $d+z(\sum_{i=1}^n \lambda_i)\neq 0$.  Indeed, if $d+z(\sum_{i=1}^n \lambda_i)= 0$, we can take $z+1$ as $z$ does not have to be the smallest so as to make the entries in $A$ non negative. Since by Lemma \ref{lem:homogeneousMatrix}, adding $z$ to all the entries in $A$ will not change the toric ideal $I_A$, we can assume that our homogeneous matrix is over the natural numbers without loss of generality and we will do so when convenient.  That is,  if $A$ is a homogeneous matrix over $\Z$, then there is a homogneous matrix $A'$ over $\N$ such that $A'\sim A$.
}\end{remark}

Consider now $A = (a_{ij})$ an $n\times p$ homogeneous matrix of rank $r$ and degree $\deg A$, and  $B = (b_{ij})$ an $m\times q$ homogeneous matrix of rank $s$ and degree $\deg B$.  The semigroup $\sA$ generated by the columns of $A$ is in $\N^n$ and  the semigroup $\sB$ generated by the columns of $B$ is in $\N^m$. The semigroup rings of $A$ and $B$ are, respectively, $\rA \simeq k[x_1, \ldots, x_p]/I_A$ and $\rB\simeq  k[y_1, \ldots, y_q]/I_B$.   Let $\lambda_i $ and $\mu_i$ be such that $\sum_{i=1}^n\lambda_ia_{ij} = \deg A $ and $\sum_{i=1}^m \mu_ib_{ij} = \deg B$.
Choose any column in $A$ and any column in $B$.  
By rearranging the $x_i$'s and $y_j$'s, we can take them to be the last column of $A$ and the first column of $B$. Moreover, by rearranging the rows of the matrices (which does not change the semigroup rings since it corresponds to relabeling the parameters $t$'s), one can always assume that $a_{np}\neq 0$ and $b_{11}\neq 0$ as well as the multiples, $\lambda_n$ of the last row of $A$ and $\mu_1$ of the first row of $B$, are positive. This assures that $\deg A+\lambda _n t $ and $\deg B+\mu_1 t$ will not be zero for any $t\neq 0$.   
Now, by Lemma \ref{lem:homogeneousMatrix}, we can add any number to the whole row of a matrix without changing the corresponding ideal and the semigroup ring. Set $e: = a_{np} -b_{11} $ and $\delta := -1$ if $e\le 0$  and $0$ otherwise. It is easy to check that $a_{np}+\delta e=b_{11}+(1+\delta)e$.

\smallskip

Consider now the following two matrices $A'$ and $B'$, both with $n+m-1$ rows, and $p$ and $q$ columns respectively:
\[
A' = \begin{pmatrix} 
a_{11} & a_{12}& \ldots & a_{1,p-1}  & a_{1p} \\
\vdots &\vdots&\ldots & \vdots& \vdots \\
a_{n-1,1} & a_{n-1,2}  &\ldots &a_{n-1,p-1} &a_{n-1,p} \\
a_{n1}+\delta e & a_{n2}+\delta e  &\ldots &a_{n,p-1}+\delta e &a_{np}+\delta e\\
b_{21}&b_{21} &\ldots&b_{21} &b_{21} \\
\vdots &\vdots&\ldots & \vdots& \vdots \\
b_{m1}  &b_{m1}&\ldots&b_{m1} &b_{m1}
\end{pmatrix},
\]
\[
B' = \begin{pmatrix} a_{1p}&a_{1p}&\ldots&a_{1p}&a_{1p}\\
\vdots &\vdots&\ldots & \vdots& \vdots \\
a_{n-1,p} &a_{n-1,p}&\ldots&a_{n-1,p}&a_{n-1,p}\\
b_{11}+(1+\delta) e &b_{12}+(1+\delta) e&\ldots&b_{1,q-1}+(1+\delta) e&b_{1q}+(1+\delta) e\\
b_{21}& b_{22}& \ldots& b_{2,(q-1)}&b_{2q}\\
\vdots &\vdots&\ldots & \vdots& \vdots \\
b_{m1} &b_{m2}&\ldots&b_{m,q-1}&b_{mq}
\end{pmatrix},
\]
and let $\tilde{C}$ be the $(n+m-1)\times (p+q)$ matrix obtained by concatenating both matrices, $\tilde{C}=(A'\vert B')$.

\begin{proposition}\label{prop:rankC}
    $\rk{\tilde{C}}=\rk{A}+\rk{B}-1$.
\end{proposition}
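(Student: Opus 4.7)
The plan is to apply the Grassmann identity
\[
\rk{\tilde C}=\rk{A'}+\rk{B'}-\dim\!\bigl(\operatorname{col}(A')\cap\operatorname{col}(B')\bigr),
\]
together with the observation that, by construction, the last column of $A'$ equals the first column of $B'$. Indeed, both have first $n-1$ entries $(a_{1p},\ldots,a_{n-1,p})$, both have last $m-1$ entries $(b_{21},\ldots,b_{m1})$, and their middle entries agree because $a_{np}+\delta e=b_{11}+(1+\delta)e$ by the very choice of $\delta$ and $e$. Denote this common vector by $\mathbf{s}$. The proof then splits into three subclaims: $\rk{A'}=r$, $\rk{B'}=s$, and $\dim(\operatorname{col}(A')\cap\operatorname{col}(B'))=1$.

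For the first, I would show that the row span of $A'$ in $\mathbb{Q}^p$ coincides with the row span of $A$. One inclusion is clear: the first $n-1$ rows of $A'$ are rows of $A$, and both $A_n+\delta e\,\mathbf{1}_p^T$ and the constant rows $b_{k1}\,\mathbf{1}_p^T$ lie in the row span of $A$ because the homogeneity relation $(\lambda_1\cdots\lambda_n)A=d_A(1\cdots 1)$ puts $\mathbf{1}_p^T$ there. For the reverse inclusion, writing $\mathbf{1}_p^T=(1/d_A)\sum_i\lambda_i A_i$ and combining with the row $A_n+\delta e\,\mathbf{1}_p^T$ of $A'$: the key identity $d_A+\lambda_n\delta e\neq 0$ (which holds because $d_A,\lambda_n>0$ and $\delta e\ge 0$) allows one to solve for $\mathbf{1}_p^T$, and hence for $A_n$, as a linear combination of rows of $A'$. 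So $\rk{A'}=r$, and symmetrically $\rk{B'}=s$. Together with $\mathbf{s}\in\operatorname{col}(A')\cap\operatorname{col}(B')$ this yields the upper bound $\rk{\tilde C}\le r+s-1$.

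The crux is the matching lower bound, i.e.\ the reverse inclusion $\operatorname{col}(A')\cap\operatorname{col}(B')\subseteq\mathbb{Q}\mathbf{s}$. Given $\mathbf{v}=A'\alpha=B'\beta$, the constant bottom $m-1$ rows of $A'$ force $B|_{\text{bot}}\,\beta=|\alpha|\,\mathbf{b}_1|_{\text{bot}}$; symmetrically $A|_{\text{top}}\,\alpha=|\beta|\,\mathbf{a}_p|_{\text{top}}$. Setting $\alpha^\ast:=\alpha-|\beta|\mathbf{e}_p$ and $\beta^\ast:=\beta-|\alpha|\mathbf{e}_1$, these conditions become $\alpha^\ast\in\ker(A|_{\text{top}})$ and $\beta^\ast\in\ker(B|_{\text{bot}})$. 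Applying homogeneity of $A$ to $\alpha^\ast$ yields $A_n\alpha^\ast=(d_A/\lambda_n)|\alpha^\ast|$, and similarly $B_1\beta^\ast=(d_B/\mu_1)|\beta^\ast|$; substituting into the $n$-th row identity $A_n\alpha+\delta e|\alpha|=B_1\beta+(1+\delta)e|\beta|$ and using $|\alpha^\ast|=|\alpha|-|\beta|=-|\beta^\ast|$ collapses everything to a single scalar equation of the form $(|\alpha|-|\beta|)\cdot K=0$, where $K$ is a combination of $d_A/\lambda_n$, $d_B/\mu_1$ and the shift constants. The step I expect to be the main obstacle is verifying $K\neq 0$, which will rest on the positivity $\lambda_n,\mu_1,d_A,d_B>0$ secured by the row rearrangement and on the sign conventions on $\delta$ and $e=a_{np}-b_{11}$. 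Once $K\neq 0$ is established, $|\alpha|=|\beta|$, and a direct block-by-block check then shows $\mathbf{v}=|\alpha|\,\mathbf{s}\in\mathbb{Q}\mathbf{s}$, closing the argument.
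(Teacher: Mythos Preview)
Your route via the Grassmann formula on column spaces is genuinely different from the paper's, which works entirely with kernels: it exhibits $p+q-(r+s-1)$ independent vectors in $\ker\tilde C$ for the upper bound, then passes to the matrix $C$ obtained by deleting the repeated column and shows that a specific set $T'$ of size $(p-r)+(q-s)$ spans $\ker C$. Your verification that $\rk{A'}=r$ via row spans is correct and is the column-space analogue of the paper's one-line remark that $A'X=0\Leftrightarrow AX=0$.

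The two arguments converge at exactly the step you single out. The paper's lower-bound computation opens with the bare assertion ``since $C$ is homogeneous, $x+y=0$'', which it does not justify; your reduction to the scalar condition $K\neq 0$ is the same hypothesis in disguise. Concretely, with $\nu=(\mu_1\lambda_1,\ldots,\mu_1\lambda_{n-1},\lambda_n\mu_1,\lambda_n\mu_2,\ldots,\lambda_n\mu_m)$ one computes $\nu^T\tilde C=\lambda_n\mu_1\,K\cdot(1,\ldots,1)$ with $K=d_A/\lambda_n+d_B/\mu_1-\min(a_{np},b_{11})$, so $\tilde C$ is homogeneous via this natural witness precisely when $K\neq 0$.

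Your expectation that positivity of $d_A,d_B,\lambda_n,\mu_1$ together with the sign convention on $\delta$ will force $K\neq 0$ is the genuine gap, and it does not close: take $A=\left(\begin{smallmatrix}0&1\\1&2\end{smallmatrix}\right)$ with $\lambda=(-1,1)$, $d_A=1$, and $B=\left(\begin{smallmatrix}2&1\\1&0\end{smallmatrix}\right)$ with $\mu=(1,-1)$, $d_B=1$. All the standing hypotheses hold ($a_{22}=b_{11}=2\neq 0$, $\lambda_2=\mu_1=1>0$), yet $e=0$ and $K=1+1-2=0$; one checks directly that $\tilde C=\left(\begin{smallmatrix}0&1&1&1\\1&2&2&1\\1&1&1&0\end{smallmatrix}\right)$ has rank $2<3$, so $\operatorname{col}(A')\cap\operatorname{col}(B')$ is two-dimensional and the conclusion fails. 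Thus neither your argument nor the paper's is complete in the generality stated. For the downstream application in Theorem~\ref{thm:gluing} this is harmless, since one only needs \emph{some} choice of columns to work, and a different column choice (or a preliminary shift via Lemma~\ref{lem:homogeneousMatrix}) avoids $K=0$; but as a proof of the proposition as written, the obstacle you flagged is real.
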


\begin{proof} 
Set $r:=\rk{A}$ and $s:=\rk{B}$.
The matrix $A$ is homogeneous and hence any $X$ such that $AX = 0$ satisfies $\sum_{i=1}^p x_i = 0$. Thus, $A'X= 0$ $\Longleftrightarrow$ $AX= 0$ and 
$\rk{A} =\rk{A'}$.  Similarly,  we have $\rk{B}=\rk{B'}$. The set of solutions to $A'X= 0$, which is the same as the set of solutions to $AX= 0$, is a $\Q$-vector space of dimension $p-r$ with basis say $v_1, \ldots , v_{p-r}$, and let $w_1, \ldots, w_{q-s}$ be a basis for the $\Q$-vector space of solutions to $B'Y = 0$ (or $BX=0$).  
  
Let us show that $T=\{
\overline{v_i}=\begin{pmatrix}
v_i\\
0_{q\times 1}\\
\end{pmatrix},
\overline{w_j}=\begin{pmatrix} 
0_{p\times 1}\\
w_j\\
\end{pmatrix},
u=\begin{pmatrix}
0_{p-1\times 1}\\
1\\
-1\\
0_{q-1\times 1}
\end{pmatrix}\} $
is a linearly independent set of solutions to $CX= 0$.
Elements in $T$ are linearly independent because the $v_i$'s and $w_j$'s are independent of each other as they have non zero entries in complementary rows. 
Moreover, if 
\[\alpha_1\overline{v_1}+\cdots+\alpha_{p-r}\overline{v_{p-r}}
+
\gamma_1\overline{w_1}+\cdots+\gamma_{q-s}\overline{w_{q-s}}
= \beta u,
\]
then $\alpha_1v_1+\cdots+\alpha_{p-r}v_{p-r}
=\beta \begin{pmatrix}
0\\ \vdots\\0\\ 1
\end{pmatrix} $, and 
$\gamma_1 w_1+\cdots+\gamma_{q-s}w_{q-s}=\beta \begin{pmatrix}
-1\\ 0\\ \vdots\\0
\end{pmatrix} $.
But this is not possible since the entries in $v_i$ (or $w_j$) all add up to zero and hence no linear combination of $v_i$ (or  $w_j$'s) will have only one nonzero term.
Thus,
$\rk{\tilde{C}}\le r+s-1$. 

\smallskip
Now let $C$ be the $(m+n-1)\times (p+q-1)$ submatrix of $\tilde{C}$ obtained by deleting the $p^{th}$ column of $\tilde{C}$ which is the same as its $p+1^{st}$ column.   
The rank of $C$ is the same as that of $\tilde{C}$.  We will show that the rank of $C$ is $r+s-1$ by showing that the $p-r+q-s$ vectors in $T'= \{
\tilde{v_i}=\begin{pmatrix}
v_i\\
0_{(q-1)\times 1}\\
\end{pmatrix},
\tilde{w_j}=\begin{pmatrix} 
0_{(p-1)\times 1}\\
w_j\\
\end{pmatrix}\}$  generate the set of solutions to $CX=0$.  We will proceed with the case $b_{11}\ge a_{np}$, so that $\delta =-1$ (the case $\delta =0$ is similar). 
%
Consider $z= \begin{pmatrix} x_1\\
\vdots\\
x_{p-1}\\
y_1\\
\vdots \\
y_q\\
\end{pmatrix}$ a solution to $CX=0$.  Then setting $x:= \sum_{j=1}^{p-1} x_j$ and $y:= \sum_{j=1}^qy_j$, we get the following $m+n-2$ relations:
\begin{equation}\label{eq:A}
\sum_{j=1}^{p-1} a_{ij}x_j+a_{ip}y = 0,\ \forall i, 1\le i\le n-1\,,
\end{equation} 
\begin{equation}\label{eq:B}
b_{i1}(x+y_1)+\sum_{j=2}^qb_{ij}y_j = 0,\ \forall i, 2\le i\le m\,.
\end{equation}
Note that one has one more relation, 
$\sum_{j=1}^{p-1} (a_{nj}+\delta e)x_j+ \sum_{j=1}^qb_{1j}y_j=0$.
%
%
Further, since $C$ is homogeneous, $x+y = 0$. 
 
\smallskip
Consider $\lambda_1,\ldots,\lambda_n$ and $d=\deg A$ such that $\sum_{i=1}^n\lambda_i a_{ij} = d$ for all $j=1,\ldots,p$, with $\lambda_n >0$. Then, we get from \eqref {eq:A},
\[\sum_{j=1}^{p-1} (d-\lambda_na_{nj})x_j+(d-\lambda_n a_{np})y   = 0.   \]\
So, $ \lambda_n (\sum_{j=1}^{p-1}a_{nj}x_j +a_{np}y) = d(x+y) = 0$ because $x+y=0$, and  
$\sum_{j=1}^{p-1}a_{nj}x_j-a_{np}x= 0$.  
%
Thus, for some $c_1,\ldots,c_{p-r}$,
$$\begin{pmatrix} x_1\\
\vdots\\
x_{p-1}\\
-x\\
0\\
\vdots \\
0\\ 
\end{pmatrix} =\sum_{j=1}^{p-r}c_j\tilde{v_j}\,.$$

Similarly, if $\mu_1,\ldots,\mu_m$ and $f=\deg B$ are such that $\sum_{i=1}^m \mu_ib_{ij}= f$ for all $j=1,\ldots,q$, and with $\mu_1>0$, one gets from \eqref{eq:B} that
$\displaystyle{\sum_{j=2}^{q} (f-\mu_1 b_{1j})y_j+(f-\mu_1 b_{11})(x+y_1 )  = 0}$. Hence
$\mu_1(\sum _{j=2}^q  b_{1j}y_j+ b_{11}(x+y_1))=f(x+y) =0$, and so
$\sum _{j=2}^q  b_{1j}y_j+ b_{11}(x+y_1)=0$. Thus, for some $d_1,\ldots,d_{q-s}$, 
$$
\begin{pmatrix} 0\\
\vdots\\
0\\
x+y_1\\
\vdots \\
y_q\\ 
\end{pmatrix} =\sum_{i=1}^{q-s}d_i\tilde{w_i}\,,$$
and $z= \sum_{j=1}^{p-r}c_j\tilde{v_j}+\sum_{i=1}^{q-s}d_i\tilde{w_i}$.
This proves, as desired, that $T'$ generates the set of solutions to $CX=0$, and hence
the rank of $C$, i.e., the rank of $\tilde{C}$, is $p+q-1- (p-r +q-s)= r+s-1$. 
\end{proof}

\begin{theorem}\label{thm:gluing} If $A$ and $B$ are homogeneous $n\times p$ and $m\times q$ matrices,
we can embed $A$ and $B$ in homogeneous matrices $A'$ and $B'$ without changing their toric ideals, 
i.e., there exists $A'$ and $B'$ with $A\sim A'$ and $B\sim B'$,
such that for $\tilde{C}=(A'\vert B')$, $\langle \tilde{C}\rangle$ is a gluing of $\sA$ and $\sB$.
Indeed, one can choose $A'$ and $B'$ such that, for some $i$ and $j$, $1\leq i\leq p$, $1\leq j\leq q$, 
$$I_{\tilde{C}}=I_{A'}+I_{B'}+\langle x_i-y_j\rangle  = I_A+I_B +\langle x_i-y_j\rangle\,.$$ 
\end{theorem}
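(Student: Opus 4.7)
The plan is to verify directly that the displayed matrices $A'$ and $B'$ realize $\langle\tilde{C}\rangle$ as a gluing of $\sA$ and $\sB$ with bridge $x_p-y_1$ (so one takes $i=p$ and $j=1$). I would first check that $A'\sim A$ and $B'\sim B$ by invoking Lemma \ref{lem:homogeneousMatrix}: the matrix $A'$ is obtained from $A$ by appending $m-1$ constant rows with entries $b_{21},\ldots,b_{m1}$, which is permitted by part (3), and by shifting the $n$-th row by the integer $\delta e$, which is permitted by part (2) because the preliminary rearrangement forced $\lambda_n>0$ so that $d+\lambda_n(\delta e)\neq 0$; the check for $B'\sim B$ is symmetric. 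This already yields the second equality in the statement, $I_{A'}+I_{B'}+\langle x_p-y_1\rangle=I_A+I_B+\langle x_p-y_1\rangle$.

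Next I would single out the numerical identity driving the construction: by definition of $\delta$, one has $a_{np}+\delta e=b_{11}+(1+\delta)e$, which by direct inspection of the two displayed matrices is exactly what forces the $p$-th column of $A'$ to coincide with the first column of $B'$. Hence $\tilde{C}$ has a repeated column and $x_p-y_1\in I_{\tilde{C}}$. The inclusion $I_{A'}+I_{B'}+\langle x_p-y_1\rangle\subseteq I_{\tilde{C}}$ is then immediate, since the first $p$ columns of $\tilde{C}$ are those of $A'$ and the last $q$ columns are those of $B'$.

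The crux is the reverse inclusion $I_{\tilde{C}}\subseteq I_{A'}+I_{B'}+\langle x_p-y_1\rangle$. I would pass to the quotient modulo $\langle x_p-y_1\rangle$, which identifies $y_1$ with $x_p$, collapses the duplicate column of $\tilde{C}$, and reduces the task to showing $I_C = I_{A'}+I_{B'}\vert_{y_1=x_p}$ in $k[x_1,\ldots,x_p,y_2,\ldots,y_q]$, where $C$ is the $(m+n-1)\times(p+q-1)$ matrix of Proposition \ref{prop:rankC}. The proof of that proposition already exhibits a $\Q$-basis of $\ker_\Q C$ consisting of the lifts $\tilde{v_j}$ of $\ker_\Q A'$ and the lifts $\tilde{w_i}$ of $\ker_\Q B'$, so every rational relation on the columns of $C$ decomposes cleanly into an $A'$-part and a $B'$-part.

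The step I expect to be the main obstacle is promoting this kernel decomposition to an ideal-theoretic equality, since a $\Q$-basis of the lattice of relations need not translate automatically into a binomial generating set of the toric ideal. My preferred route is a height/primeness comparison: Proposition \ref{prop:rankC} gives $\mathrm{ht}\,I_C=(p+q-1)-(r+s-1)=(p-r)+(q-s)$, while $I_{A'}+I_{B'}\vert_{y_1=x_p}$ is the kernel of the natural surjection $k[x_1,\ldots,x_p,y_2,\ldots,y_q]\to k[C]$ realized as the amalgamation of $k[A']$ and $k[B']$ along the common column, hence a domain of the same Krull dimension $r+s-1$. Two prime ideals of equal height, one contained in the other, must coincide, yielding $I_{\tilde{C}}=I_{A'}+I_{B'}+\langle x_p-y_1\rangle$, which is the binomial characterization of gluing.
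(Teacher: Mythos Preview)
Your outline tracks the paper's proof closely: build $A',B'$ via Lemma~\ref{lem:homogeneousMatrix}, note the common column forces $x_p-y_1\in I_{\tilde C}$, and then pull the ideal equality out of the kernel description established in Proposition~\ref{prop:rankC}. The paper does exactly this, in one sentence: from the fact that every solution of $\tilde C X=0$ lies in the span of $T$ it simply declares $I_{\tilde C}=I_{A'}+I_{B'}+\langle x_p-y_1\rangle$.

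Where your write-up diverges is in the last paragraph, and that is where it acquires a gap. Your height/primeness route needs $R'/(I_{A'}+I_{B'})$ to be a domain, and you justify this by calling it ``the amalgamation of $k[A']$ and $k[B']$ along the common column.'' But the pushout $k[A']\otimes_{k[z]}k[B']$ of two domains over a common subring is \emph{not} a domain in general (e.g.\ $k[x]/(x^2-z)\otimes_{k[z]}k[y]/(y^2-z)\cong k[x,y]/(x^2-y^2)$), so this assertion carries the whole weight of the argument and is left unproved. Worse, the phrasing ``is the kernel of the natural surjection $R'\to k[C]$'' is circular: that kernel is $I_C$ by definition, so you are assuming the equality you want.

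The honest fix, and what the paper's proof is tacitly relying on, is already in the proof of Proposition~\ref{prop:rankC}: the decomposition there is not merely a $\Q$-basis statement but an \emph{explicit integer} splitting. For $z'=(x_1,\dots,x_{p-1},y_1,\dots,y_q)\in\ker_{\Z}C$ one has $z'=\tilde v+\tilde w$ with $\tilde v=(x_1,\dots,x_{p-1},-x,0,\dots,0)\in\widetilde{\ker_\Z A}$ and $\tilde w=(0,\dots,0,x+y_1,y_2,\dots,y_q)\in\widetilde{\ker_\Z B}$, both with integer entries. Since $\tilde v$ and $\tilde w$ overlap only in the single coordinate corresponding to $z$, a short case split on the sign of that coordinate shows directly that the binomial of $z'$ lies in the ideal generated by the binomials of $\tilde v$ and $\tilde w$; hence $I_C\subseteq I_{A'}+I_{B'}$. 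This replaces your unproved primeness claim by an elementary binomial identity and completes the argument along the paper's lines.
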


\begin{proof}
As already mentioned, one can assume without loss of generality that $i=p$ and $j=1$.
By the proof of Proposition \ref{prop:rankC} above, we see that any solution to $\tilde{C}X= 0$ is a linear combination of the vectors in $T$, i.e., if $\tilde{C}z= 0$, then 
$z = \sum_{i=1}^{p-r}r_i\overline{v_i}+ \sum_{i=1}^{q-s} s_i \overline{w_i}+ \alpha u$ 
for some $r_i, s_i, \alpha$. Thus, $I_{\tilde{C}}= I_A'+I_B' + \langle x_p-y_1\rangle$ and, by definition, $\langle \tilde{C}\rangle$ is a gluing of $\sA$ and $\sB$. 
\end{proof}

\begin{remark}{\rm
In the proof of above Theorem \ref{thm:gluing}, we did not use the sufficient condition for the existence of gluing \cite[Thm. 2.8]{Res22}.  Indeed, if the entries in the $p$th column of $A'$ and the entries in the first column of $B'$ are relatively prime, then we can have an alternate proof as follows:  since the last column $\a_p$ of $A'$ is the same as the first column of $B'$, we have $\a_p $ is the gluable lattice point defined in \cite[Def 2.1] {Res22} and it is in $\langle A'\rangle \cap \langle B'\rangle$.  Hence by Proposition \ref{prop:rankC} and \cite[Thm. 2.8]{Res22}, for $\tilde{C}=(A' \mid B')$, $\langle \tilde{C}\rangle$ is a gluing of $\sA$ and $\sB$, and the gluing binomial is $x_p-y_1$.  
}\end{remark}

\begin{example}\label{ex:gluing3x5}{\rm 
Consider the matrices
$A=\begin{pmatrix} 2&0&6&4&0\\ 3&4&1&0&2\\ 2&3&0&3&5\end{pmatrix}$ and
$B=\begin{pmatrix} 3&2&2&5&0\\ 3&2&4&0&1\\ 0&2&0&1&5\end{pmatrix}$. Then, $I_A=\langle x_1x_4-x_3x_5,  x_2^2x_4-x_1^2x_5, x_1^3-x_2^2x_3\rangle\subset R_A:=k[x_1,\ldots,x_5]$ and $I_B=\langle y_1y_2^3-y_3^2y_4y_5, y_2^2y_3^3y_4-y_1^5y_5, y_2^5y_3-y_1^4y_5^2,  y_1^6y_2-y_3^5y_4^2, y_2^8-y_1^3y_3y_4y_5^3\rangle\subset R_B:=k[y_1,\ldots,y_5]$. The matrix built in Proposition \ref{prop:rankC} and Theorem \ref{thm:gluing} is
\[\langle \tilde{C}\rangle=\begin{pmatrix} 
2&0&6&4&0&0&0&0&0&0\\ 3&4&1&0&2&2&2&2&2&2\\ 2&3&0&3&5&5&4&4&7&2\\ 
3&3&3&3&3&3&2&4&0&1\\ 0&0&0&0&0&0&2&0&1&5\end{pmatrix}\]
and one can check using, e.g., {\tt Singular} \cite{Sing}, that for $R:=k[x_1,\ldots,x_5,y_1,\ldots,y_5]$, one has $I_{\tilde{C}}=I_A\cdot R+I_B\cdot R+\langle x_5-y_1\rangle\subset R$ as expected by Theorem \ref{thm:gluing}.
This example continues later in Example \ref{ex:splitting3x5}.
}\end{example}

\section{Splitting a homogeneous toric ideal}\label{sec:splitting}

\subsection{The general homogeneous case}
A toric ideal is a binomial prime ideal in a polynomial ring.  Such an ideal is homogeneous if it is homogenous with the standard grading. In general, the sum of two toric ideals may not be toric. When it is, we will call it a splitting.

\begin{definition}
A toric ideal $I$ is a {\it splitting} of two toric ideals $I_1$ and $I_2$, if $I = I_1+I_2$.
\end{definition}

By applying Theorem \ref{thm:gluing}, given two homogeneous toric ideals $I_A$ and $I_B$, the semigroups $\sA$ and $\sB$ can be glued. We will now show that this provides a splitting of a toric ideal $I_C=I_A'+I_B'$ where $I_A'$ and $I_B'$ are copies of $I_A$ and $I_B$ respectively. Moreover, we will have $\height {I_{C}} = \height {I_A}+ \height {I_B}$, and the minimal graded free resolution of the semigroup ring $k[C]$ will be the tensor product of the minimal resolutions of the two smaller parts (that are copies of the minimal graded free resolutions of $k[A]$ and $k[B]$). 

\begin{theorem}\label{thm:A+B=C}
Let $I_A\subset k[x_1, \ldots, x_p]$ and $I_B\subset k[y_1, y_2, \ldots y_q]$ be two homogeneous toric ideals associated to $n\times p$ and $m\times q$ matrices  $A$ and $B$. Set $R:=k[x_1, \ldots, x_{p-1},z, y_2, \ldots, y_q]$.
If $I_A'\subset R$ and $I_B'\subset R$ are the ideals obtained by extending to $R$ the ideals obtained after making $x_p=z$ and $y_1=z$ in $I_A$ and $I_B$ respectively, then $I_{C}:=I_A'+I_B'$ is a homogeneous toric ideal, and $R/I_{C}$ is isomorphic to the semigroup ring of a suitable $(m+n-1)\times (p+q-1)$ homogeneous matrix $C$. 
In particular, the toric ideal $I_{C}$ is a splitting of two toric ideals $I'_A$ and $I'_B$.  

Moreover, if  $F_A$ and $F_B$ are minimal graded free resolutions of $R/I_A'$ and $R/I_B'$ respectively, then $F_A\otimes F_B$ is a minimal graded resolution of $k[C]$.  

Indeed, we can identify any of the $x_i, 1\le i\le p$ with any of the $y_j, 1\le j \le q$ and the resulting sum is the appropriate toric ideal. 
\end{theorem}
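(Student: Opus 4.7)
The plan is to bootstrap from Theorem \ref{thm:gluing}. That theorem gives homogeneous matrices $A'$ and $B'$, with $A\sim A'$ and $B\sim B'$, such that in the ring $\tilde R := k[x_1,\ldots,x_p,y_1,\ldots,y_q]$ one has $I_{\tilde C} = I_{A'}+I_{B'}+\langle x_p - y_1\rangle$, where $\tilde C = (A'\mid B')$. I would begin by identifying the ring $R$ of the statement with $\tilde R/(x_p - y_1)$ via the substitution $x_p\mapsto z$, $y_1\mapsto z$. Under this quotient the ideals $I_{A'}$ and $I_{B'}$ of $\tilde R$ descend to the ideals $I_A'$ and $I_B'$ of $R$ appearing in the statement, and one gets
\[
R/I_C \;=\; R/(I_A'+I_B') \;\cong\; \tilde R/I_{\tilde C} \;\cong\; k[\tilde C] \;=\; k[C],
\]
where $C$ is the $(m+n-1)\times(p+q-1)$ matrix obtained from $\tilde C$ by deleting its duplicated column, as in Proposition \ref{prop:rankC}. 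This single step simultaneously verifies that $I_C$ is a homogeneous toric ideal, that $R/I_C$ is the semigroup ring of $C$, and that $I_C$ is the splitting $I_A'+I_B'$.

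For the resolution, I would exhibit $R/I_A'$ and $R/I_B'$ as Tor-independent $R$-modules, so that the tensor product of their minimal free resolutions is acyclic in positive degrees and resolves $R/I_A'\otimes_R R/I_B' = R/I_C$. Concretely, set $R_A:=k[x_1,\ldots,x_{p-1},z]$ and $R_B:=k[z,y_2,\ldots,y_q]$, and observe that $R = R_A\otimes_{k[z]} R_B$. The $R$-resolution $F_A$ of $R/I_A'$ arises by base change along $R_A\to R$ from the minimal $R_A$-resolution $G_A$ of $R_A/I_A\vert_{x_p=z}$, and analogously $F_B=G_B\otimes_{R_B}R$. Because $z$ maps to a nonzero monomial under $\phi_A$ and $k[A]$ is a domain, $z$ is a non-zero-divisor on $R_A/I_A\vert_{x_p=z}$, so this quotient is torsion-free---and hence flat---over the PID $k[z]$; the symmetric statement holds for $R_B/I_B\vert_{y_1=z}$. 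A K\"unneth-type computation then gives
\[
\operatorname{Tor}^{k[z]}_i\bigl(R_A/I_A\vert_{x_p=z},\,R_B/I_B\vert_{y_1=z}\bigr)=0 \quad\text{for all } i\ge 1,
\]
which in turn yields the required vanishing $\operatorname{Tor}^R_i(R/I_A',R/I_B')=0$. Minimality of the tensor product complex is automatic, since the individual differentials in $F_A$ and $F_B$ have entries in the maximal ideal of $R$.

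I expect the main technical obstacle to be the change-of-rings bookkeeping around this Tor-independence: specifically, the identification that turns $F_A\otimes_R F_B$ into $G_A\otimes_{k[z]}G_B$ and thereby allows the $k[z]$-flatness to be invoked. An alternative route is to start from the classical gluing resolution of $k[\tilde C]$ as a $\tilde R$-module, namely $F_A\otimes_k F_B\otimes_k K(x_p-y_1)$, and perform change of rings along the surjection $\tilde R\twoheadrightarrow R=\tilde R/(x_p-y_1)$; this still reduces to the same flatness input. Finally, the closing assertion of the theorem---that one may identify any $x_i$ with any $y_j$---follows by the relabeling of rows and columns already used in Section \ref{sec:gluing}, which changes neither the toric ideals involved nor their resolutions.
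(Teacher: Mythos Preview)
Your argument is correct. The first half---identifying $R$ with $\tilde R/(x_p-y_1)$, descending $I_{\tilde C}$ to $I_C$, and reading off that $I_C$ is toric and splits---matches the paper's proof exactly.

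For the resolution, you take a genuinely different route. The paper stays upstairs in $\tilde R=k[X,Y]$: it notes that $\tilde R/(I_A+I_B)\cong k[A]\otimes_k k[B]$ is a domain, so $x_p-y_1$ is a non-zero-divisor on it, whence $\operatorname{Tor}^{\tilde R}_i(\tilde R/(I_A+I_B),\,\tilde R/(x_p-y_1))=0$ for $i>0$ and the resolution $\hat F_A\otimes_k\hat F_B$ base-changes to a minimal free $R$-resolution of $R/I_C$; one then identifies this base-change with $F_A\otimes_R F_B$. Your approach instead establishes Tor-independence of $R/I_A'$ and $R/I_B'$ directly over $R$, by factoring $R=R_A\otimes_{k[z]}R_B$ and using that $k[A]$ and $k[B]$ are $k[z]$-flat (torsion-free over a PID). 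Both arguments rest on the same underlying fact---that the semigroup rings involved are domains---but the paper's packaging is more economical: a single non-zero-divisor check and one base change, versus your two flatness checks plus the K\"unneth bookkeeping you correctly flag as the delicate point. The ``alternative route'' you sketch at the end is essentially the paper's own argument.
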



\begin{proof}
Existence of a $\tilde{C}$ with the property, $I_{\tilde{C}} = I_{A'}+I_{B'}+(x_p-y_1)=I_{A}+I_{B}+(x_p-y_1)$ follows from Theorem \ref{thm:gluing}. Removing one of the two identical columns in $\tilde{C}$ corresponding to variables $x_p$ and $y_1$, one gets a $(m+n-1)\times (p+q-1)$ matrix $C$.

\smallskip
Now, $x_p-y_1$ is a non zero divisor for $k[X,Y]/I_A+I_B$.   Hence 
$$Tor^{k[X,Y]}_i (k[X,Y]/I_A+I_B, k[X,Y]/(x_p-y_1)) = 0, i>0\,.$$  
Hence $F_A\otimes_kF_B \otimes_{k[X,Y]} k[X,Y]/(x_p-y_1) $ is exact and resolves $R/I_A+I_B+(x_p-y_1) \simeq k[\tilde{C}]$ over $k[X,Y]/ (x_p-y_1)\simeq R$. Now, $I_A+I_B+(x_p-y_1)/(x_p-y_1) \simeq I_A'+I_B' \subset R$.

\smallskip
Hence the ideal $I_A'+I_B'$ is homogeneous toric ideal of codimension $r+s-1$, where $r= \rk A, s= \rk B$.   In otherwords, the complex map from $F_A\otimes F_B\to F_A\otimes F_B$, given by multiplication by $x_p-y_1$ becomes zero map when you first mod out by $x_p-y_1$ and hence the mapping cylinder is indeed $F_A\otimes F_B$.  
\end{proof}

\begin{remark}\label{rk:height}{\rm
In Theorem \ref{thm:A+B=C}, if $A$ and $B$ are two homogeneous matrices of rank $r$ and $s$ respectively, so that $I_A'$ and $I_B'$ are homogeneous toric ideals of height $p-r$ and $q-s$ in $k[x_1, \ldots, x_{p-1},z]$ and $k[z, y_2, \ldots, y_q]$ respectively, then the matrix $C$ will have rank $r+s-1$, and hence
$$\height{I_C} = \height{I_A}+ \height{I_B}\,.$$
}\end{remark}

\begin{example}\label{ex:splitting3x5}{\rm 
Considering the matrices $A$ and $B$ in Example \ref{ex:gluing3x5}, one has that 
in $R=k[x_1,\ldots,x_4,z,y_2,\ldots,y_5]$, for the toric ideals $I_A'=\langle x_1x_4-x_3z,  x_2^2x_4-x_1^2z, x_1^3-x_2^2x_3\rangle$ and $I_B'=\langle zy_2^3-y_3^2y_4y_5, y_2^2y_3^3y_4-z^5y_5, y_2^5y_3-z^4y_5^2,  z^6y_2-y_3^5y_4^2, y_2^8-z^3y_3y_4y_5^3\rangle$ obtained by making $x_5=z$ in $I_A$ and $y_1=z$ in $I_B$ respectively, one has the splitting $I_{C}=I_A'+I_B'$ for the matrix 
\[C=\begin{pmatrix} 
2&0&6&4&0&0&0&0&0\\ 3&4&1&0&2&2&2&2&2\\ 2&3&0&3&5&4&4&7&2\\ 
3&3&3&3&3&2&4&0&1\\ 0&0&0&0&0&2&0&1&5\end{pmatrix}\,.\]
\vskip .2truein
The Betti diagrams of $R_A/I_A$ and $R_B/I_B$ are, respectively, as follows:
\vskip .2truein
\begin{multicols}{2}
{\footnotesize\begin{verbatim}
           0     1     2                                                             
------------------------                                                             
    0:     1     -     -                                                             
    1:     -     1     -                                                             
    2:     -     2     2                                                             
------------------------                                                             
total:     1     3     2 
\end{verbatim}}
\columnbreak
{\footnotesize\begin{verbatim}
           0     1     2     3                                                       
------------------------------                                                       
    0:     1     -     -     -                                                       
    1:     -     -     -     -                                                       
    2:     -     -     -     -                                                       
    3:     -     1     -     -                                                       
    4:     -     -     -     -                                                       
    5:     -     2     1     -                                                       
    6:     -     1     1     -                                                       
    7:     -     1     4     2                                                       
------------------------------                                                       
total:     1     5     6     2  
\end{verbatim}}
\end{multicols}
\smallskip
Since the minimal graded free resolution of $R/I_{C}$ is the tensor product of the minimal graded free resolutions of $k[x_1,\ldots,x_4,z]/I_A'$ and $k[z,y_2,\ldots,y_5]/I_B'$, the Betti diagram of $R/I_{C}$ is
\newpage
\begin{center}
{\footnotesize\begin{verbatim}
           0     1     2     3     4     5                                           
------------------------------------------                                          
    0:     1     -     -     -     -     -                                           
    1:     -     1     -     -     -     -                                           
    2:     -     2     2     -     -     -                                           
    3:     -     1     -     -     -     -                                           
    4:     -     -     1     -     -     -                                           
    5:     -     2     3     2     -     -                                           
    6:     -     1     3     1     -     -                                           
    7:     -     1     9     9     2     -                                           
    8:     -     -     3     8     4     -                                           
    9:     -     -     2    10    12     4                                           
------------------------------------------                                           
total:     1     8    23    30    18     4 
\end{verbatim}}
\end{center}

}\end{example}

\subsection{The two dimensional case}\label{subsec:2dim}
 
In the case of two homogeneous subsemigroups $\sA$ and $\sB$ of $\N^2$, there is a simpler general procedure for gluing them in $\N^3$ and then obtain the corresponding splitting. 

\smallskip
Suppose $R$ is a homogeneous affine semigroup rings of dimension $2$.  Then, without loss of generality, there is a $2\times p$ matrix $A$ over the integers (and indeed over $\N$, by our previous argument) such that $R = k[A]$. 

 \smallskip
Since $A$ is homogeneous, there exists $\lambda_1,\lambda_2\in\Z$ and $c>0$ such that $(\lambda_1 \lambda_2)\cdot A= c\cdot (1, \ldots 1)$. Note that one can assume without loss of generality that $\gcd (\lambda_1, \lambda_2) =1$ and, by changing the rows if necessary, we may choose $\lambda_1 \neq 0$. Moreover, by Lemma \ref{lem:homogeneousMatrix}, one can assume that the rows of $A$ are relatively prime. 

\smallskip
If $\lambda_2=0$, then all the entries on the first row of $A$ are equal, and hence $A$ is equivalent to a matrix whose entries on the first row are all equal to 1. Moreover, if the least nonnegative integer on the second row is non zero, one can substract it to all the entries on the row and hence one can assume that the first entry on the second row is 0.
 
\smallskip
Now if $\lambda_2\neq 0$, i.e., $\lambda_1$ and $\lambda_2$ are both nonzero, we can multiply the first row of $A$ by $\lambda_1$ and then add $\lambda_2a_{21}$ to the first row with without changing the ring $R$. 
Finally, we can also add $-a_{21}$ to the second row (assuming, after eventually reordering the columns, that $a_{21}$ is the smallest entry on the second row) and then multiply by $\lambda_2$ without changing $R$. The new matrix $A'$, equivalent to $A$, will now look like
\[
A' = \begin{pmatrix} 
c & c-a_1& \ldots & c-a_{p-2} & c-a_{p-1}\\
0& a_1  &\ldots &a_{p-2} &a_{p-1}\\
\end{pmatrix}.
\]
Adding the second row to the first one and then simplify by $c$, again one gets an equivalent matrix with all the entries on its first row equal to 1 and the first entry on the second equal to 0.

\begin{remark}{\rm 
    We have shown that associated to an affine semigroup ring $R$ of dimension 2, there is always a numerical semigroup $\langle a_1,\ldots,a_{p-1}\rangle\subset\N$ such that $R=k[A]$ for $A = \begin{pmatrix} 
1 & 1& \ldots & 1 & 1\\
0& a_1 &\ldots &a_{p-2} &a_{p-1}\\
\end{pmatrix}$.
}\end{remark}

Thus, for any two homogeneous affine semigroup rings $R_1$ and $R_2$ of dimension $2$, we can assume without loss of generality that $R_1 = k[A]\simeq k[x_1,\ldots,x_p]/I_A$ and $R_2 = k[B]\simeq k[y_1,\ldots,y_q]/I_B$, where 
\[
A = \begin{pmatrix} 
1 & 1& \ldots & 1 & 1\\
0& a_1 &\ldots &a_{p-2} &a_{p-1}\\
\end{pmatrix}\quad\hbox{and}\quad 
B = \begin{pmatrix}  
1& 1& \ldots& 1&1\\
0& b_1 &\ldots &b_{q-2}&b_{q-1}\\
\end{pmatrix}.
\]
In this case, we glue them easily in $\N^3$ through the following $3\times (p+q-1)$ matrix:
\[
C=\begin{pmatrix} 
a_1 &\ldots &a_{p-2} &a_{p-1}&0&0&\ldots&0&0\\
1& \ldots & 1 & 1&1&1& \ldots & 1 & 1\\
0&\ldots&0&0&0& b_1 &\ldots &b_{q-2}&b_{q-1}\\
\end{pmatrix}.
\]
Then, setting $R:=k[x_1,\ldots,x_{p-1},z,y_2,\ldots,y_q]$, the ideal $I_C$ splits as $I_C=I_A'+I_B'$ with $I_A'=I_A\vert_{x_p=z}\cdot R$ and $I_B'=I_B\vert_{y_1=z}\cdot R$.

\subsection{An application to non-homogeneous toric ideals}

Let $A$ be an $n\times p$ matrix over $\N$ whose columns generate an affine semigroup  $\sA$  in $\N^n$.  Consider the {\it homogenization} $A^H$ of $A$ by adding to $A$ a row of $1's$.  Note that if $I_A$ is homogeneous in the standard grading, then $I_A = I_{A^H}$. 

\smallskip
The ideal $I_{A^H} \subset I_A$ and it is a homogeneous prime ideal. Moreover, $\height {I_{A^H}}= \height {I_A}-1$ because $\rk{A^H}=\rk{A}+1$, unless $I_A$ is homogeneous in which case $I_A= I_{A^H}$. Thus, $I_{A^H}$ is the largest homogenous ideal contained in $I_A$. This concept has already appeared and utilized in  Zariski-Samuel \cite{Z-S}. 

\begin{definition}{\rm
Given an polynomial ideal $I$, the ideal generated by all the homogeneous elements in $I$ is called the {\it homogeneous sift} of $I$ and denoted by $I^\star$. It is the largest homogeneous ideal contained in $I$.
}\end{definition}

In \cite{Z-S}, they prove that if $I$ is prime (or primary), then so is $I^{\star}$.  In our situation, we directly get that $(I_A)^{\star}$ is prime for a toric ideal $I_A$ since $(I_A)^\star=I_{A^H}$ as shown before the definition. Indeed, $(I_A)^{\star}$ is toric.

\begin{example}\label{ex:arithSeq}{\rm 
If we consider the numerical semigroup $\sA$ generated by an arithmetic sequence as in  \cite{ja13}, i.e., 
$A=\begin{pmatrix}m_0&m_0+d&\ldots&m_0+nd\end{pmatrix}$ with $\gcd (m_0,d)=1$, then
\[A^H=\begin{pmatrix}m_0&m_0+d&\ldots&m_0+nd\\1&1&\ldots&1\end{pmatrix}\sim
\begin{pmatrix}0&d&\ldots&nd\\1&1&\ldots&1\end{pmatrix}\sim 
\begin{pmatrix}0&1&\ldots&n\\1&1&\ldots&1\end{pmatrix}.
\]
Thus, the homogeneous sift of $I_A$ is $(I_A)^\star=I_2(M)$, the ideal of the $2\times 2$ minors of the matrix $M=\begin{pmatrix}x_0&\ldots&x_{n-1}\\x_1&\ldots&x_n\end{pmatrix}$, the so-called rational normal scroll. In this case, one knows that $I_A=(I_A)^\star+I_2(B)$ for some $2\times t$ matrix $B$ defined in \cite{ja13}.
}\end{example}

\smallskip
Now given $A=\begin{pmatrix}\a_1 & \ldots& \a_p\end{pmatrix}$ and $B=\begin{pmatrix}\b_1& \ldots& \b_q\end{pmatrix}$ two $n\times p$ and $m\times q$ matrices over the natural numbers that are not homogeneous, one may not be able to glue them, but one can glue $A^H$ and $B^H$ as in Theorem \ref{thm:A+B=C} to get a matrix $C$ such that $I_C$ splits.
%
%
Let $I_{1\times t}$ denote the $1\times t$ matrix all of whose entries are $1$, and let $(\a_p)$, resp. $(\b_1)$, be the $p\times 1$, resp. $q\times 1$, submatrix of $A$, resp. $B$, formed by its last, resp. first, column.  

\smallskip
We have
\begin{equation}\label{eq:gluingSifts}
    C =  \begin{pmatrix} 
A &  (\a_p) \times 1_{1\times q-1} \\
1_{1\times p}&1_{1\times q-1}\\
(\b_1) \times 1_{1\times p-1}&   B\\
\end{pmatrix}\,.
\end{equation}
Note that the matrix $C$ is an $(m+n+1)\times (p+q-1)$ matrix. Thus, in above block representation of $C$, $(\b_1) \times 1_{1\times p-1}$ is a $m \times (p-1)$ matrix. 

\smallskip
As in Theorem \ref{thm:A+B=C}, if $I_A' \subset k[x_1, \ldots, x_{p-1},z]$ and $I_B' \subset k[z, y_2, \ldots, y_q]$, we will have $I_C=I_{A^H}+I_{B^H}$ is a homogeneous toric ideal of height $\height{I_A}+\height{I_B}-2$, and it is contained in $I_A'+I_B'$ that may not be a prime ideal, but the homogenous prime (i.e., toric) ideal $I_{A^H}+I_{B^H}$ is contained in $I_A'+I_B'$.

\begin{example}{\rm
Consider the matrices $A= \begin{pmatrix} 
1&1&0&3&2 \\
2&1&1&0&2
\end{pmatrix}$ and $B=\begin{pmatrix} 5&12&13\end{pmatrix}$. Then, 
$A^H= \begin{pmatrix} 
1&1&0&3&2 \\
2&1&1&0&2\\
1&1&1&1&1
\end{pmatrix}$ and $B^H=\begin{pmatrix} 5&12&13\\ 1&1&1\end{pmatrix}$. 
One can check using, e.g., \cite{Sing} that $I_A$ and $I_B$ are nonhomegeneous toric ideals both minimally generated by 3 binomials, while $(I_A)^\star=I_{A^H}=\langle x_1x_2-x_3x_5, x_1^2x_3x_4-x_2^3x_5,  x_1^3x_4-x_2^2x_5^2, x_2^4-x_1x_3^2x_4\rangle$ and $(I_B)^\star=I_{B^H}=\langle y_2^8-y_1y_3^7\rangle$.
For $C=
\begin{pmatrix}
1&1&0&3&2&2&2 \\
2&1&1&0&2&2&2\\
1&1&1&1&1&1&1\\
5&5&5&5&5&12&13
\end{pmatrix}
$ as in \eqref{eq:gluingSifts},
$I_C=I_{A^H}\vert_{x_5=z}+I_{B^H}\vert_{y_1=z}=\langle 
x_1x_2-x_3z, x_1^2x_3x_4-x_2^3z, x_1^3x_4-x_2^2z^2, x_2^4-x_1x_3^2x_4, y_2^8-zy_3^7
\rangle\subset R=k[x_1,\ldots,x4,z,y_2,y_3]$. Note that $I_C=(I_D)^\star$ where $D$ is the $3\times 7$ matrix obtained by removing the row of 1s in $C$. 
In this example, $I_D$ is minimally generated by 10 nonhomogeneous binomials.
}
\end{example}

\section{Application to Gluing and splitting of Graphs}\label{sec:graphs}

Let $G=(V(G), E(G))$ be  a graph with vertex set $V(G)=\{t_1,\ldots,t_{n_G}\}$ and edge set $E(G)=\{x_1,\ldots,x_{p_G}\}$.  The incidence matrix of $G$ is the $n_G\times p_G$ matrix $A(G)$ such that $A(G)_{ij}=1$ if $t_i\in x_j$ and $0$ otherwise. Note that each column of $A(G)$ has exactly 2 non-zero entries that are both equal to 1. The semigroup associated to the graph $G$ is the subsemigroup $\sAg$ of $\N^{n_G}$  generated by the columns of $A(G)$ and one can consider the semigroup ring of $\sAg$, $\rAg$.   Since each row adds up to $2$, the matrix $A(G)$ and the semigroup ring $k[A(G)]$ are homogenous.  

\smallskip
One can define a notion of gluing and splitting on graphs in general. Two graphs can be glued by identifying the vertices and edges of isomorphic subgraphs as in \cite[Construction 4.1]{FHKV}. In particular, one can always glue two graphs along an edge. This gluing and splitting along an edge is defined below. 

\begin{definition} 
{\rm Consider two disjoint connected graphs $G_1$ and $G_2$ and choose two  vertices $a,b$ in $G_1$ connected by an edge $e_1$, and two vertices $c,d$ in $G_2$ connected by an edge $e_2$. The graph $G$ obtained by identifying $a$ with $c$ and $b$ with $d$ (and by identifying the corresponding edges $e_1$ and $e_2$) is called a graph obtained by {\it gluing $G_1$ and $G_2$ along one edge} as in Figure \ref{fig:glueSquares}. Note that there are two different ways of gluing two graphs along the same edge, and that the graphs $G_1$ and $G_2$ become induced subgraphs of $G$ after gluing.

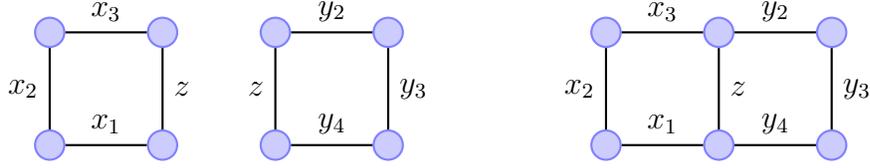
\begin{figure}
\centering
\begin{tikzpicture}[node distance={15mm}, thick, main/.style = {circle,draw=blue!50,fill=blue!20}]
\node[main] (1) {}; 
\node[main] (2) [right of=1] {};
\node[main] (3) [below of=1] {};
\node[main] (4) [right of=3] {};
\draw (3) -- node[midway, above] {$x_1$} (4);
\draw (3) -- node[midway, left] {$x_2$} (1);
\draw (1) -- node[midway, above] {$x_3$} (2);
\draw (2) -- node[midway, right] {$z$} (4);
\node[main] (5) [right of=2] {}; 
\node[main] (6) [right of=5] {};
\node[main] (7) [below of=5] {};
\node[main] (8) [right of=7] {};
\draw (7) -- node[midway, above] {$y_4$} (8);
\draw (7) -- node[midway, left] {$z$} (5);
\draw (5) -- node[midway, above] {$y_2$} (6);
\draw (6) -- node[midway, right] {$y_3$} (8);
\end{tikzpicture} 
\hskip 40pt
\begin{tikzpicture}[node distance={15mm}, thick, main/.style = {circle,draw=blue!50,fill=blue!20}]
\node[main] (1) {}; 
\node[main] (2) [right of=1] {};
\node[main] (3) [below of=1] {};
\node[main] (4) [right of=3] {};
\draw (3) -- node[midway, above] {$x_1$} (4);
\draw (3) -- node[midway, left] {$x_2$} (1);
\draw (1) -- node[midway, above] {$x_3$} (2);
\draw (2) -- node[midway, right] {$z$} (4);
\node[main] (6) [right of=2] {};
\node[main] (8) [right of=4] {};
\draw (4) -- node[midway, above] {$y_4$} (8);
\draw (2) -- node[midway, above] {$y_2$} (6);
\draw (6) -- node[midway, right] {$y_3$} (8);
\end{tikzpicture} 
\caption{The graph $G$ on the right is obtained by gluing two squares $G_1$ and $G_2$ along an edge
} \label{fig:glueSquares}
\end{figure}

Conversely, if a graph $G$ has one edge $e$ such that $G$ can be obtained by gluing two disjoint graphs $G_1$ and $G_2$ along one edge that becomes $e$ in $G$, we will say that $G$ {\it splits into $G_1$ and $G_2$ along the edge $e$} as in Figure \ref{fig:splitSquares}.

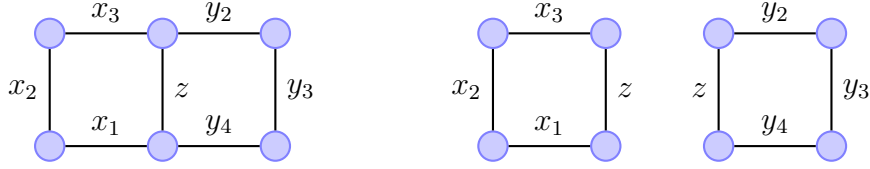
\begin{figure}
\centering
\begin{tikzpicture}[node distance={15mm}, thick, main/.style = {circle,draw=blue!50,fill=blue!20}]
\node[main] (1) {}; 
\node[main] (2) [right of=1] {};
\node[main] (3) [below of=1] {};
\node[main] (4) [right of=3] {};
\draw (3) -- node[midway, above] {$x_1$} (4);
\draw (3) -- node[midway, left] {$x_2$} (1);
\draw (1) -- node[midway, above] {$x_3$} (2);
\draw (2) -- node[midway, right] {$z$} (4);
\node[main] (6) [right of=2] {};
\node[main] (8) [right of=4] {};
\draw (4) -- node[midway, above] {$y_4$} (8);
\draw (2) -- node[midway, above] {$y_2$} (6);
\draw (6) -- node[midway, right] {$y_3$} (8);
\end{tikzpicture} 
\hskip 40pt
\begin{tikzpicture}[node distance={15mm}, thick, main/.style = {circle,draw=blue!50,fill=blue!20}]
\node[main] (1) {}; 
\node[main] (2) [right of=1] {};
\node[main] (3) [below of=1] {};
\node[main] (4) [right of=3] {};
\draw (3) -- node[midway, above] {$x_1$} (4);
\draw (3) -- node[midway, left] {$x_2$} (1);
\draw (1) -- node[midway, above] {$x_3$} (2);
\draw (2) -- node[midway, right] {$z$} (4);
\node[main] (5) [right of=2] {}; 
\node[main] (6) [right of=5] {};
\node[main] (7) [below of=5] {};
\node[main] (8) [right of=7] {};
\draw (7) -- node[midway, above] {$y_4$} (8);
\draw (7) -- node[midway, left] {$z$} (5);
\draw (5) -- node[midway, above] {$y_2$} (6);
\draw (6) -- node[midway, right] {$y_3$} (8);
\end{tikzpicture} 
\caption{The graph $G$ on the left splits into two squares} \label{fig:splitSquares}
\end{figure}
}\end{definition}

If $G$ is a gluing of $G_1$ and $G_2$ along one edge, then $n_G=n_{G_1}+n_{G_2}-2$ and $p_G=p_{G_1}+p_{G_2}-1$
The three incidence matrices $A(G_1)$, $A(G_2)$ and $A(G)$ are related as in the lemma below.

\begin{lemma}\label{lem:MatrixGluingGraph}
    If $G$ is a gluing of two connected graphs $G_1$ and $G_2$ along one edge then, up to permutation of rows and columns, the incidence matrices of $A(G_1)$, $A(G_2)$ and $A(G)$ are as follows: 
\[
A(G_1)=\left(
\begin{array}{cc}
      & 0 \\
      &\vdots\\
A(G_1')&0\\
      & 1 \\
      & 1
\end{array}\right),\quad 
A(G_2)=\left(
\begin{array}{cc}
1     &  \\
1     &\\
0     &A(G_2')\\
\vdots& \\
0      & 
\end{array}\right),\quad
A(G)=\left(
\begin{array}{ccc}
     & 0 &\\
 &\vdots&0\\
 A(G_1')  &0&\\
     & 1 & \\
     & 1 & \\
    &0 & A(G_2') \\
 0  &\vdots&\\
    &0& 
\end{array}\right).
\]
\end{lemma}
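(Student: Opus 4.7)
My plan is to choose compatible orderings of vertices and edges in the three graphs so that the three claimed block decompositions read off tautologically from the definition of the incidence matrix. Let $e$ be the common edge with endpoints $a,b$. By the definition of gluing along one edge, $V(G)=V(G_1)\cup V(G_2)$ with $V(G_1)\cap V(G_2)=\{a,b\}$, and $E(G)=E(G_1)\cup E(G_2)$ with $E(G_1)\cap E(G_2)=\{e\}$. Define $G_i'$ to be the subgraph of $G_i$ obtained by removing only the edge $e$ (keeping all vertices), so that $A(G_i')$ is an $n_{G_i}\times(p_{G_i}-1)$ matrix whose rows are indexed by all the vertices of $G_i$, including the two rows corresponding to $a$ and $b$.

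First, I would order the vertices of $G_1$ so that $a,b$ come last and the edges so that $e$ is last. The last column of $A(G_1)$ is then the incidence vector of $e$, which has a $1$ exactly in the last two rows and $0$ elsewhere, while the remaining $p_{G_1}-1$ columns form $A(G_1')$ by construction. Applying the symmetric choice to $G_2$ (putting $a,b$ first and $e$ first) gives the claimed block decomposition of $A(G_2)$. For $A(G)$, I would order the vertices as $V(G_1)\setminus\{a,b\}$, then $a,b$, then $V(G_2)\setminus\{a,b\}$, and the edges as $E(G_1)\setminus\{e\}$, then $e$, then $E(G_2)\setminus\{e\}$. The only observation needed is that an edge in $E(G_1)\setminus\{e\}$ has both endpoints in $V(G_1)$ and therefore contributes a zero in every row corresponding to a vertex in $V(G_2)\setminus\{a,b\}$; symmetrically for edges in $E(G_2)\setminus\{e\}$. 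Combined with the fact that the column of $e$ has $1$'s only in the two middle rows, this yields exactly the displayed block form for $A(G)$.

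Because the orderings internal to each $G_i$ are preserved by the chosen ordering on $G$, the block labeled $A(G_i')$ appearing inside $A(G)$ is literally the same matrix as the one appearing inside $A(G_i)$. The lemma is really a bookkeeping statement and I do not expect a serious obstacle; the only care needed is to keep the three orderings mutually compatible and to remember that each $G_i'$ still contains the vertices $a,b$, which is why the rows for $a$ and $b$ inside each $A(G_i')$ block can (and generally do) contain nonzero entries coming from other edges of $G_i$ incident to $a$ or $b$.
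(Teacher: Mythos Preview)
Your proposal is correct and follows essentially the same approach as the paper: both arguments choose compatible orderings of vertices and edges (placing the shared edge $e$ and its endpoints $a,b$ at the interface), define $G_i'$ as $G_i$ with $e$ deleted but all vertices retained, and read off the block forms directly. Your write-up is in fact slightly more explicit than the paper's in spelling out why the off-diagonal zero blocks arise in $A(G)$.
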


\begin{proof}
First note that permutation of rows, respectively columns, of the incidence matrix 
correspond to renaming vertices, respectively edges, on the graph.
Hence, the order of the columns and rows of the incidence matrix of a graph is not relevant and one can locate the column corresponding to the gluing edge $e$ as the last, respectively first, column of $A(G_1)$, respectively $A(G_2)$. Moreover, the last, respectively first, two rows of $A(G_1)$, respectively $A(G_2)$, correspond to the two vertices of the gluing edge $e$ (in any order). Then, the matrices $A(G_1)$ and $A(G_2)$ show as in the theorem 
where $A(G_i')$ is the incidence matrix of the graph $G_i'$ with $E(G_i')=E(G_i)\setminus\{e\}$ and $V(G_i')=V(G_i)$ for $i=1,2$.

\smallskip
The glued graph $G$ has $n_G=n_{G_1}+n_{G_2}-2$ vertices and $p_G=p_{G_1}+p_{G_2}-1$ edges and its incidence matrix $A(G)$ is the $n_G\times p_G$ matrix given in the theorem.
\end{proof}

By \cite[Lem. 10.2.6]{villarreal}, the rank of the incidence matrix of a connected graph $G$, and hence the dimension of the semigroup ring $\rAg]$ is given by 
\begin{equation}\label{eq:dim}
\dim{\rAg}=\rk{A(G)}=\left\{\begin{array}{cl}
  n_G-1   & \hbox{if $G$ is bipartite},\\
  n_G   &  \hbox{otherwise}.
\end{array}
\right.
\end{equation}

Note that the graph $G$ in Lemma \ref{lem:MatrixGluingGraph} is bipartite if and only if both $G_1$ and $G_2$ are. This is easily seen directly on the graph because a graph is bipartite if and only if it has no odd cycles.  
%
Thus, by \eqref{eq:dim}, one gets that 
\[
\dim{\rAg}=\left\{\begin{array}{cl}
  n_G-1   &  \hbox{if both $G_1$ and $G_2$ are bipartite},\\
  n_G   & \hbox{otherwise}.
\end{array}
\right.
\]

\begin{remark}\label{rem:bipartite}{\rm 
Using the previous observation, if $G_1$ and $G_2$ are two connected graphs and $G$ is a gluing of $G_1$ and $G_2$ along an edge:
\begin{itemize}
    \item if $G_1$ and $G_2$ are both bipartite, then $\rk{A(G)}=n_{G_1}+n_{G_2}-3=\rk{A(G_1)}+\rk{A(G_2)}-1$,
    \item if $G_1$ and $G_2$ are both non-bipartite, then $\rk{A(G)}=n_{G_1}+n_{G_2}-2=\rk{A(G_1)}+\rk{A(G_2)}-2$, and
    \item if one of the two graphs is bipartite and the other is not, then $\rk{A(G)}=n_{G_1}+n_{G_2}-2=\rk{A(G_1)}+\rk{A(G_2)}-1$.
\end{itemize}
In particular, $\rk{A(G)}=\rk{A(G_1)}+\rk{A(G_2)}-1$ if and only if at least one of the two graphs is bipartite.
}\end{remark}

\smallskip
As an application of Theorem \ref{thm:A+B=C} and Remark \ref{rem:bipartite}, we recover and extend an interesting result in \cite{FHKV}.
Suppose that $G_1$ and $G_2$ is a splitting of $G$ along an edge.  In \cite[Cor. 4.8]{FHKV}, they prove that if at least one of the two graphs is bipartite, then the toric ideal $I_G$ splits into $I_G=I_{G_1}+I_{G_2}$.   We prove the converse as well, and the fact that a splitting of graph automatically creates a toric splitting by going to hypergraphs if needed.

\begin{theorem}\label{thm:gluingGraphs}
\begin{enumerate}
    \item\label{thm:item:1bipartite}
Let $G$ be a connected graph, and suppose that $G_1$ and $G_2$ is a splitting of $G$ along an edge. The toric ideal $I_G$ splits into $I_G=I_{G_1}+I_{G_2}$ if and only if least one of the two graphs is bipartite.
\item\label{thm:item:nonebipartite}
If neither $G_1$ nor $G_2$ are bipartite, there is indeed a $3$-uniform hypergraph $\tilde{G}$ such that the toric ideal $I_{\tilde{G}}$ splits as $I_{G_1}+I_{G_2}$.
    \item\label{thm:item:hypergraph}
If $G_1$ and $G_2$ are two arbitrary disjoint connected graphs, bipartite or not, there exists a $3$-uniform hypergraph $\tilde{G}$ such that the toric ideal $I_{\tilde{G}}$ splits as $I_{G_1}+I_{G_2}$.
\end{enumerate}
\end{theorem}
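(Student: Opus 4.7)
The plan is to derive all three parts from a single application of Theorem~\ref{thm:A+B=C} to the incidence matrices $A=A(G_1)$ and $B=A(G_2)$, which are both homogeneous of degree~$2$. In the setting of Parts \ref{thm:item:1bipartite} and \ref{thm:item:nonebipartite}, the shared edge $e$ of the splitting provides a natural gluing column; for Part \ref{thm:item:hypergraph}, I would simply pick any edge $e_1$ of $G_1$ and any edge $e_2$ of $G_2$ to play that role. After reordering rows and columns so that $a_{np}=b_{11}=1$, Theorem~\ref{thm:A+B=C} produces an $(n_{G_1}+n_{G_2}-1)\times(p_{G_1}+p_{G_2}-1)$ matrix $C$ whose prime toric ideal $I_C$ equals $I_{G_1}+I_{G_2}$ in the common ring $R$ obtained by identifying the two gluing variables to a single $z$.

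For Part \ref{thm:item:1bipartite}, I would observe that any element of $\ker A(G_i)$ extends by zero-padding to an element of $\ker A(G)$, so $I_{G_1}+I_{G_2}\subseteq I_G$, and since both are prime, equality reduces to equality of heights. By Remark~\ref{rk:height}, $\height{I_C}=\height{I_{G_1}}+\height{I_{G_2}}$, whereas $\height{I_G}=(p_{G_1}+p_{G_2}-1)-\rk A(G)$. The trichotomy in Remark~\ref{rem:bipartite} then decides the question: if at least one of $G_1,G_2$ is bipartite, then $\rk A(G)=\rk A(G_1)+\rk A(G_2)-1$, so $\height{I_G}=\height{I_C}$ and $I_G=I_{G_1}+I_{G_2}$; if neither is bipartite, then $\rk A(G)=\rk A(G_1)+\rk A(G_2)-2$, so $\height{I_G}=\height{I_C}+1$ and the inclusion is strict.

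For Parts \ref{thm:item:nonebipartite} and \ref{thm:item:hypergraph}, the plan is to verify that $C$ is in fact the incidence matrix of a $3$-uniform hypergraph $\tilde G$, which will then give $I_{\tilde G}=I_C=I_{G_1}+I_{G_2}$, the desired splitting. Since $a_{np}=b_{11}=1$ one has $e=0$ and the $\delta e$ correction vanishes, so every entry of $C$ is either an entry of $A(G_1)$, $A(G_2)$, or a constant $a_{i,p},b_{i,1}\in\{0,1\}$. Each column of the $A'$-block sums to $2+\sum_{i\ge 2}b_{i,1}=2+(2-1)=3$, and symmetrically each column of the $B'$-block sums to $3$. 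Simplicity of $G_1$ and $G_2$ makes the columns of $A'$ and of $B'$ internally distinct, and a column of $A'$ cannot equal one of $B'$ because the former varies in the top block (the $A(G_1)$-rows) whereas the latter is constant there. Since no bipartiteness hypothesis enters, the same construction proves both Parts \ref{thm:item:nonebipartite} and \ref{thm:item:hypergraph}.

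The main obstacle will be the bookkeeping in the last paragraph: carefully tracking the constant ``filler'' rows that Theorem~\ref{thm:A+B=C} introduces in $A'$ and $B'$, and checking that they conspire to give exactly column sum $3$ with $0/1$ entries and distinct columns. Once this is done, everything else is a straightforward height comparison via Remarks~\ref{rk:height} and~\ref{rem:bipartite}.
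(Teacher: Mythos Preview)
Your argument is correct and follows essentially the same route as the paper. For Part~\ref{thm:item:1bipartite}, the paper invokes the necessary rank condition for gluing from \cite{Res22} and then appeals to Theorems~\ref{thm:gluing}/\ref{thm:A+B=C}, whereas you bypass the external reference by showing directly that $I_{G_1}+I_{G_2}=I_C$ is prime (Theorem~\ref{thm:A+B=C}), contained in $I_G$ by zero-padding, and then comparing heights via Remarks~\ref{rk:height} and~\ref{rem:bipartite}; this is the same rank computation repackaged, but your version is more self-contained. For Parts~\ref{thm:item:nonebipartite}/\ref{thm:item:hypergraph}, the paper simply asserts that the matrix $C$ from Theorem~\ref{thm:A+B=C} is the incidence matrix of a $3$-uniform hypergraph, while you actually carry out the check (that $e=0$ kills the correction, that column sums are $3$, and that columns are distinct), which the paper omits.

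One small point on your distinctness argument: the claim that a non-gluing column of $A'$ cannot coincide with a column of $B'$ deserves one more sentence. It is not just that the top block of $A'$ ``varies'' while that of $B'$ is constant; rather, if some column $\ell<p$ of $A'$ matched the constant top block $(a_{1p},\ldots,a_{n-1,p})$, then columns $\ell$ and $p$ of $A(G_1)$ would agree in rows $1,\ldots,n-1$, forcing $a_{n\ell}\neq a_{np}=1$, hence $a_{n\ell}=0$, and column $\ell$ would have only one nonzero entry, contradicting that $G_1$ is a simple graph. With that line added, the bookkeeping is complete.
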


\begin{proof}
Remark \ref{rem:bipartite} implies that the necessary rank condition in \cite{Res22} for gluing is satisfied if and only if at least one of the graphs $G_1$ and $G_2$ is bipartite.  So, $A(G)$ in Lemma \ref{lem:MatrixGluingGraph} produces a splitting on the toric ideals
if and only if at least one of them is bipartite. Thus by Theorem \ref {thm:gluing} and applying the same argument as in the proof of Theorem \ref{thm:A+B=C}, we know that $I_G$ has  a toric splitting as $I_{G_1} + I_{G_2}$.  

Further, if neither of the graphs are bipartite, since the incidence matrices are homogenous, we can indeed glue them as is Theorem \ref{thm:A+B=C} and the resulting matrix $C$ is the incidence matrix of a $3$-uniform hypergraph $\tilde{G}$ with number of vertices $n_{\tilde{G}} = n_{G_1}+n_{G+2}-1$ and number of edges (triangles) equal to $p_{\tilde{G}} = p_{G_1}+p_{G_2} -1$. 
\end{proof}

\begin{example}{\rm
\begin{enumerate}
    \item\label{item:gluingSquares} 
    If $G$ is the graph obtained by gluing two squares $G_1$ and $G_2$ along an edge as shown in Figure \ref{fig:glueSquares}, then $I_{G_1}=\langle x_1x_3-x_2z\rangle$, $I_{G_2}=\langle zy_3-y_2y_4\rangle$,
    and $I_{G}$ splits as $I_{G}=I_{G_1}+I_{G_2}$ as expected by Theorem \ref{thm:gluingGraphs} \eqref{thm:item:1bipartite}.
    \item If $G_1$ and $G_2$ are two bow ties (both non-bipartite graphs) then, by Theorem \ref{thm:gluingGraphs} \eqref{thm:item:1bipartite}, if we glue them along an edge to obtain a graph $G$ as in Figure \ref{fig:glueBowties}, $I_G$ does not split as $I_{G}=I_{G_1}+I_{G_2}$. Indeed, $I_{G_1}=\langle x_1x_3x_5-x_2x_4z\rangle$, $I_{G_2}=\langle zy_3y_5-y_2y_4y_6\rangle$, and the ideal $I_G$ is generated by 5 binomials.
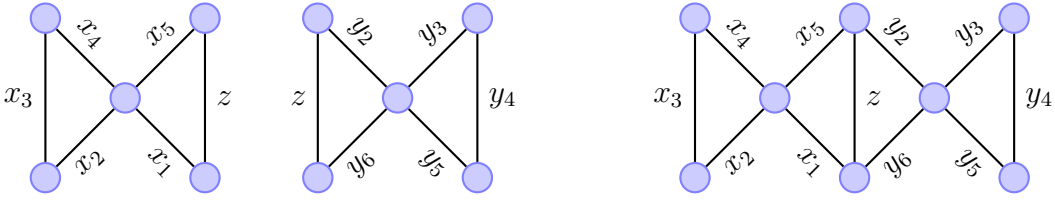
\begin{figure}
\centering
\begin{tikzpicture}[node distance={15mm}, thick, main/.style = {circle,draw=blue!50,fill=blue!20}]
\node[main] (1) {}; 
\node[main] (9) [below right of=1] {};
\node[main] (2) [above right of=9] {};
\node[main] (3) [below left of=9] {};
\node[main] (4) [below right of=9] {};
\draw (9) -- node[midway, below left, pos=1, sloped] {$x_1$} (4);
\draw (9) -- node[midway, below right, pos=1, sloped] {$x_2$} (3);
\draw (3) -- node[midway, left] {$x_3$} (1);
\draw (9) -- node[midway, above right, pos=1, sloped] {$x_4$} (1);
\draw (9) -- node[midway, above left, pos=1, sloped] {$x_5$} (2);
\draw (2) -- node[midway, right] {$z$} (4);
\node[main] (5) [right of=2] {}; 
\node[main] (10) [below right of=5] {};
\node[main] (6) [above right of=10] {};
\node[main] (7) [below left of=10] {};
\node[main] (8) [below right of=10] {};
\draw (10) -- node[midway, below left, pos=1, sloped] {$y_5$} (8);
\draw (10) -- node[midway, below right, pos=1, sloped] {$y_6$} (7);
\draw (7) -- node[midway, left] {$z$} (5);
\draw (10) -- node[midway, above right, pos=1, sloped] {$y_2$} (5);
\draw (10) -- node[midway, above left, pos=1, sloped] {$y_3$} (6);
\draw (6) -- node[midway, right] {$y_4$} (8);
\end{tikzpicture} 
\hskip 40pt
\begin{tikzpicture}[node distance={15mm}, thick, main/.style = {circle,draw=blue!50,fill=blue!20}]
\node[main] (1) {}; 
\node[main] (9) [below right of=1] {};
\node[main] (2) [above right of=9] {};
\node[main] (3) [below left of=9] {};
\node[main] (4) [below right of=9] {};
\draw (9) -- node[midway, below left, pos=1, sloped] {$x_1$} (4);
\draw (9) -- node[midway, below right, pos=1, sloped] {$x_2$} (3);
\draw (3) -- node[midway, left] {$x_3$} (1);
\draw (9) -- node[midway, above right, pos=1, sloped] {$x_4$} (1);
\draw (9) -- node[midway, above left, pos=1, sloped] {$x_5$} (2);
\draw (2) -- node[midway, right] {$z$} (4);
\node[main] (10) [below right of=2] {};
\node[main] (6) [above right of=10] {};
\node[main] (8) [below right of=10] {};
\draw (10) -- node[midway, below left, pos=1, sloped] {$y_5$} (8);
\draw (10) -- node[midway, below right, pos=1, sloped] {$y_6$} (4);
\draw (10) -- node[midway, above right, pos=1, sloped] {$y_2$} (2);
\draw (10) -- node[midway, above left, pos=1, sloped] {$y_3$} (6);
\draw (6) -- node[midway, right] {$y_4$} (8);
\end{tikzpicture} 
\caption{The graph $G$ on the right is obtained by gluing two bow ties $G_1$ and $G_2$ along an edge
} \label{fig:glueBowties}
\end{figure}
    \item Now by Theorem \ref{thm:gluingGraphs} \eqref{thm:item:nonebipartite}, if one glues the incidence matrices of two bow ties as in Theorem \ref{thm:A+B=C}, one obtains the incidence matrix of a 3-regular hypergraph $\tilde{G}$ such that $I_{\tilde{G}}$ splits as $I_{\tilde{G}}=I_{G_1}+I_{G_2}$.
    \item Finally, by Theorem \ref{thm:gluingGraphs} \eqref{thm:item:hypergraph}, going back to the case of two squares $G_1$ and $G_2$ in \eqref{item:gluingSquares}, one can also use Theorem \ref{thm:A+B=C} to glue them into a 3-regular hypergraph $\tilde{G}$ such that $I_{\tilde{G}}$ splits as $I_{\tilde{G}}=I_{G_1}+I_{G_2}=I_G$ for the graph $G$ defined in \eqref{item:gluingSquares}.
\end{enumerate}
}\end{example}
    
\section{Further Examples}\label{sec:examples}

\subsection{Selfgluing}

We can sometimes glue a semigroup $\sA$ with itself in order to obtain a new semigroup $\sC$ whose defining ideal will be the sum of two copies of the original toric ideal $I_A$, plus a gluing binomial. When this can be done in such a way that the gluing binomial is of the form $x_i-y_j$ like in Theorem \ref{thm:gluing} when $\sA$ is homogeneous, then we get a toric ideal that splits as the sum of two copies of the same ideal as we will see later in Examples \ref{ex:selfglueCM} and \ref{ex:selfglueNoneCM}.
Let us give now an example of a numerical semigroup that can be also selfglued in such a way that the resulting ideal splits.

\begin{example}\label{ex:numericalSelfglued}
{\rm
Two numerical semigroups can always be glued so any numerical semigroup can be selfglued but there are many ways to do it. Consider the numerical semigroup $\sA=\langle 5,12,13,16\rangle$. The defining ideal of $k[A]$ is minimally generated by 6 nonhomogeneous binomials and consider two copies of this ideal in disjoint sets of variables, $I_A\subset k[x_1,\ldots,x_4]$ and $I_B\subset k[y_1,\ldots,y_4]$. Choosing, for example, $k_1=17=5+12$ and $k_2=18=5+13$, one gets that for $C=k_1\cdot A\cup k_2\cdot A=\langle 85,204,221,272,90,216,234,288\rangle$, $\sC$ is a selfgluing of $\sA$ and $I_C=I_A+I_B+\langle x_1x_3-y_1y_2\rangle$. If one now chooses $k_1=5$ and $k_2=16$, the gluing binomial becomes $x_4-y_1$ and hence this provides a splitting: the toric ideal $I_D\subset R:=k[x_1,x_2,x_3,z,y_2,y_3,y_4]$ defined by the numerical semigroup $\sD=\langle 25,60,65,80,192,208,256\rangle$ splits as the sum of two copies of the same toric ideal, $I_D=I_A\vert_{x_4=z}\cdot R+I_B\vert_{y_1=z}\cdot R$. 
}\end{example}

\subsection{Constructing Gorenstein, Cohen-Macaulay or non Cohen-Macaulay toric varieties of higher dimensions}

We saw by theorem \ref{thm:gluing} that we can embed nondegenerate homogeneous semigroups in higher dimensional spaces where they are degenerate and glue them there. Moreover, we can do that in such a way that we get an ideal that splits as the sum of our two original ideals.  This can be used to build Gorenstein, Cohen-Macaulay or non Cohen-Macaulay surfaces (or toric varieties of higher dimension in general). This is because that a gluing of two semigroups is Cohen-Macualay (resp. Gorenstein) if and only if the two semigroups are Cohen-Macaulay (resp. Gorenstein) as shown in \cite[Thm. 1.5]{Res22}.  Let's give two examples, the first one obtained by selfgluing a Cohen-Macaulay curve in $\PP_k^3$ with itself to get a Cohen-Macaulay surface in $\PP_k^6$, and the other by selfgluing a non Cohen-Macaulay curve in $\PP_k^3$ with itself to get a non Cohen-Macaulay surface in $\PP_k^6$.

\begin{example}\label{ex:selfglueCM}
{\rm
The monomial curve in $\PP_k^3$ defined by the semigroup $\sA$ generated by the columns of the matrix
$A=\begin{pmatrix} 5&4&3&0\\ 0&1&2&5\end{pmatrix}$ is known to be Cohen-Macaulay. It is indeed of Hilbert-Burch and 
$I_A=\langle x_1x_3-x_2^2, x_1x_2x_4-x_3^3, x_1^2x_4-x_2x_3^2\rangle$.
Then, for 
$\tilde{C}=\begin{pmatrix} 5&4&3&0&0&0&0&0\\ 0&1&2&5&5&4&3&0\\ 0&0&0&0&0&1&2&5\end{pmatrix}$,
the semigroup generated by the columns of $\tilde{C}$ defines a projective surface  in $\PP_k^7$ that is Cohen-Macaulay because it is the gluing of two Cohen Macaulay curves.
Considering now the submatrix $C$ of $\tilde{C}$ obtained by removing the repeated central column, 
one gets a toric ideal $I_C$ defining an arithmetically Cohen-Macaulay monomial surface in $\PP_k^6$. The ideal $I_C$ splits as the sum of two copies of the same ideal in different sets of variables, one in $k[x_1,x_2,x_3,z]$ ($I_A\vert_{x_4=z}$) and one in $k[z,y_2,y_3,y_4]$.
}\end{example}

\begin{example}\label{ex:selfglueNoneCM}
{\rm
Doing the same with the matrix $A=\begin{pmatrix} 5&4&1&0\\ 0&1&4&5\end{pmatrix}$, one gets that $I_A=\langle x_1x_4-x_2x_3, x_2x_4^3-x_3^4, x_1x_3^3-x_2^2x_4^2, x_1^2x_3^2-x_2^3x_4, x_1^3x_3-x_2^4\rangle$ defines a non arithmetically Cohen-Macaulay curve in $\PP_k^3$. Thus, the matrix $C=\begin{pmatrix} 5&4&1&0&0&0&0\\ 0&1&4&5&4&1&0\\ 0&0&0&0&1&4&5\end{pmatrix}$ defines a non arithmetically Cohen-Macaulay surface in $\PP_k^6$ whose defining ideal $I_C$ splits as the sum of two copies of $I_A$.
}\end{example}

\subsection{Iterated gluing and splitting}

The gluing construction can be iterated. This can always be done in the numerical and in the homogeneous cases. Again, when the gluing binomial is linear (and homogeneous), one gets a splitting: iterating this construction provides a splitting of a toric ideal in more than 2 parts.

\begin{example}\label{ex:iteratedNumerical}
Consider $A_1 = \begin{pmatrix} 5&8&11\end{pmatrix}$, 
$A_2= \begin{pmatrix} 7&10&12\end{pmatrix}$ and $A_3 = \begin{pmatrix} 6&11&14\end{pmatrix}$, and the associated toric ideals $I_{A_1}\subset k[x_1,x_2,x_3]$, $I_{A_2}\subset k[y_1,y_2,y_3]$ and $I_{A_3}\subset k[z_1,z_2,z_3]$, all of them minimally generated by 3 binomials. Choosing $k_1=17\in\langle A_2\rangle$ and $k_2=13\in\langle A_1\rangle$, one gets that for $C=k_1\cdot A_1\cup k_2\cdot A_2$, $\sC$ is a gluing of $\langle A_1\rangle$ and $\langle A_2\rangle$ and $I_C=I_{A_1}+I_{A_2}+\langle x_1x_2-y_1y_2\rangle$. One can now glue $\sC$ with $\langle A_3\rangle$ choosing, for instance, $k_1'=17\in \langle A_3\rangle$ and $k_3=85+91\in\sC$ and we finally get that for $D=(17)^2\cdot A_1 \cup (17\times 13)\cdot A_2 \cup (85+91)\cdot A_3$, i.e., 
\[D=\begin{pmatrix} 1445&2312&3179&1547&2210&2652&1056&1936&2464\end{pmatrix},\]
the numerical semigroup $\sD$ is a gluing of $\langle A_1\rangle$, $\langle A_2\rangle$ and $\langle A_3\rangle$ and $I_D=I_{A_1}+I_{A_2}+I_{A_3}+\langle x_1x_2-y_1y_2\rangle+\langle x_1y_1-z_1z_2\rangle$.
\end{example}

\begin{example}{\rm
Further, as in Example \ref{ex:numericalSelfglued}, one can also choose  $k_1$, $k_2$, $k'_1$ and $k_3$ and produce a splitting. For the same $A_1 = \begin{pmatrix} 5&8&11\end{pmatrix}$, 
$A_2= \begin{pmatrix} 7&10&12\end{pmatrix}$ and $A_3 = \begin{pmatrix} 6&11&14\end{pmatrix}$ as in Example  \ref{ex:iteratedNumerical}, if we let $k_1=7$, $k_2=11$, $k'_1=6$ and $k_3=k_2\times 7=77$, then for   
$E=\begin{pmatrix} 
210&336&462&660&792&847&1078
\end{pmatrix}$,
the toric ideal $I_E$ associated to the numerical semigroup $\langle E\rangle$ splits as the sum of three toric ideals,
\[I_E=I_{A_1}\vert_{x_3=z}+I_{A_2}\vert_{y_1=z}+I_{A_3}\vert_{z_1=z}\subset k[x_1,x_2,z,y_2,y_3,z_2,z_3]\,.\]
}\end{example}


%
%

\end{document}